\documentclass[reqno]{amsart}

\usepackage[a4paper,width={15cm},left=2.5cm,bottom=3.5cm, top=3.5cm]{geometry}

\usepackage{graphicx,subfigure,color}

\usepackage{relsize,exscale}

\numberwithin{equation}{section}

\usepackage[latin1]{inputenc}
\usepackage[english]{babel}

\usepackage{amsmath,amsthm,amsfonts,latexsym,amssymb}
\usepackage[colorlinks]{hyperref}
\hypersetup{linkcolor=blue,citecolor=blue,filecolor=black,urlcolor=blue}

\usepackage{color}

{ \theoremstyle{plain}
\newtheorem{theorem}{Theorem}[section]
\newtheorem{proposition}[theorem]{Proposition}
\newtheorem{lemma}[theorem]{Lemma}
\newtheorem{corollary}[theorem]{Corollary}
  \theoremstyle{remark}
\newtheorem{remark}[theorem]{Remark}
  \theoremstyle{definition}
\newtheorem{definition}[theorem]{Definition}
}


\usepackage[normalem]{ulem}


\begin{document}
\subjclass[2010]{}

\keywords{}

\title[]{Some evaluations of the fractional $p$-Laplace operator on radial functions }

\author[F. Colasuonno]{Francesca Colasuonno}
\email{francesca.colasuonno@unibo.it}

\author[F. Ferrari]{Fausto Ferrari}
\email{fausto.ferrari@unibo.it}

\address{
Dipartimento di Matematica\newline\indent
Alma Mater Studiorum Universit\`a di Bologna\newline\indent
piazza di Porta S. Donato, 5\newline\indent
40126 Bologna, Italy
}

\author[P. Gervasio]{Paola Gervasio}
\email{paola.gervasio@unibs.it}
\address{Dipartimento di Ingegneria Civile, Architettura, Territorio, Ambiente e di Matematica\newline\indent
Universit\`a degli Studi di Brescia\newline\indent
via Branze, 43\newline\indent
25123 Brescia, Italy
}

\author[A. Quarteroni]{Alfio Quarteroni}
\email{alfio.quarteroni@polimi.it}
\address{MOX, Dipartimento di Matematica,\newline\indent
Politecnico di Milano\newline\indent
via Bonardi, 9\newline\indent
20133 Milano, Italy\newline\indent
and\newline\indent
EPFL Lausanne, Switzerland (Professor Emeritus)
}
\date{\today}

\begin{abstract} 
We face a rigidity problem for the fractional $p$-Laplace operator to extend to this new framework some tools useful for the linear case. It is known that $(-\Delta)^s(1-|x|^{2})^s_+$ and $-\Delta_p(1-|x|^{\frac{p}{p-1}})$ are constant functions in $(-1,1)$ for fixed $p$ and $s$. We evaluated $(-\Delta_p)^s(1-|x|^{\frac{p}{p-1}})^s_+$ proving that it is not constant in $(-1,1)$ for some $p\in (1,+\infty)$ and $s\in (0,1)$.  This conclusion is obtained numerically thanks to the use of very accurate Gaussian numerical quadrature formulas.
\end{abstract}

\maketitle

\section{Introduction}
In this paper we wish to investigate, in a nonlocal nonlinear framework, some tools that have proved to be particularly useful for obtaining symmetry results for local operators.

It is well known that one of the crucial steps for applying the moving plane method to overdetermined problems \`a la Serrin is via a comparison principle. In the nonlocal setting there are, in the literature, several versions of comparison principles: in the linear case $p=2$, they follow by linearity from the maximum principle, while in the nonlinear case $p\neq 2$, they are more difficult to obtain. Strong maximum principles for fractional Laplacian-type operators have been proved in \cite{MN}, a weak maximum principle for antisymmetric solutions of problems governed by the fractional Laplacian can be found in \cite{FJ} (see also \cite{JW} for more general nonlocal operators), and a version of the strong maximum principle in the case of nonlocal Neumann boundary conditions can be found in \cite{CC}. For the fractional $p$-Laplacian operator, we refer to \cite{LL} for a weak comparison principle (see also \cite{IMS}), and to \cite{J} for a strong comparison principle; while some versions of the strong maximum principle and Hopf lemma can be found in \cite{CL,DPQ}. 
In the first part of this paper we revisit some results concerning the comparison principle for the fractional $p$-Laplace operator in bounded domains and prove a slightly new version of the strong comparison principle in Theorem \ref{thm:main}.

In the second part of the paper we address the study of the $p$-fractional torsion problem 
\begin{equation}\label{eq:rigidity}
\begin{cases}
(-\Delta_p)^s u=1 \quad&\mbox{in }B,\\
u=0&\mbox{in }\mathbb R^N\setminus B,
\end{cases}
\end{equation} 
where $s\in(0,1)$, $p>1$, $B\subset\mathbb R^N$ ($N\ge 1$) is a ball,
\begin{equation}\label{eq:classical-Deltaps0}
(-\Delta_p)^su(x):=c_{N,s,p}\lim_{\varepsilon\to 0^+}\int_{(B_\varepsilon(x))^c}\frac{|u(x)-u(y)|^{p-2}(u(x)-u(y))}{|x-y|^{N+ps}}dy,\quad x \in B
\end{equation}
denotes the fractional $p$-Laplace operator, and $c_{N,s,p}>0$ a suitable normalization constant. Such a problem admits a unique solution, which is radial and radially non-decreasing, cf. \cite[Lemma 4.1]{IMS}, but whose analytic expression is not known in the nonlocal nonlinear case: $s\in (0,1)$ and $p\neq 2$. 

On the other hand, in the local case $s=1$, it is easy to prove that the function $(1-|x|^m)$, with $m=\frac{p}{p-1}$, has constant $p$-Laplacian in $(-1,1)$, see for instance \cite{CF}. Moreover, in the linear case $p=2$, it has been proved that
$(1-x^2)_+^s$ satisfies $(-\Delta)^s(1-x^2)_+^s=\mathrm{Const.}$ in $(-1,1)$, see \cite{Dyda}. In view of these two results, and recalling that $(-\Delta_p)^su(x)\to -\Delta_pu(x),$ when $s\to 1^-,$ see \cite{IH}, as well as, of course, that $(-\Delta_2)^su(x)=(-\Delta)^su(x),$ it would be interesting to check whether the function $(1-|x|^m)_+^s$ may satisfy the equation
  $$(-\Delta_p)^s(1-|x|^m)_+^s=\mathrm{Const.}>0,$$
for every $x\in (-1,1)\subset \mathbb{R}$. In fact, the construction of the solution of the problem \eqref{eq:rigidity} would follow easily by a homogeneity argument. This result however does not hold true. As a matter of fact,  we prove that there exist $p>2,$ $s\in (0,1),$ $x_1,x_2\in (-1,1)$ such that $x_1\not=x_2$ and  $(-\Delta_p)^s(1-|x_1|^m)_+^s\not=(-\Delta_p)^s(1-|x_2|^m)_+^s.$
Our proof follows by investigating the value of
 $$((-\Delta_p)^s(1-|x|^m)_+^s)_{|x=0}=2c_{1,s,p}\left(\frac{1}{sp}+\int_0^1\frac{(1-(1-y^m)^s)^{p-1}}{y^{1+sp}}dy\right),$$
 where the exact value of $c_{1,s,p}$ is given in Section \ref{Chap3}.

The paper is organized as follows. In Section \ref{Chap2} we deduce a strong comparison principle that holds for the fractional $p$-Laplace operator in any dimension $N\ge 1$. Notice that, in the local case, a similar result has been proved only in dimension $N=2,$ see \cite{M}. In Section \ref{Chap3} we prove, by following a different strategy with respect to \cite{Dyda}, that the $s$-fractional Laplace operator of $(1-x^2)^s_+$ in $(-1,1)$ is constant. In  Section \ref{Chap4} we prepare the ground for a numerical evaluation of the $s$-fractional $p$-Laplacian of $(1-x^m)^s_+$, proving integrability properties, see Propositions \ref{prop:Wsp} and \ref{prop:gxy}, that are useful to yield error estimates for our numerical integration formaulae.
Finally, in Section \ref{Chap5} we show, by computing numerically the integral in (\ref{eq:classical-Deltaps0}), that there exist $p\neq 2$ and $s\in(0,1)$ such that the $s$-fractional $p$-Laplace operator of $(1-x^m)^s_+$ is not constant in $(-1,1)$.

\section{The strong comparison principle for the fractional $p$-Laplacian}\label{Chap2}
In this section, we consider the following system of inequalities
\begin{equation}\label{eq:main}
\begin{cases}
(-\Delta_p)^s u + q(x)|u|^{p-2}u \le (-\Delta_p)^s v + q(x)|v|^{p-2}v \quad&\mbox{in }\Omega,\\
u \le v&\mbox{in }\mathbb R^N,
\end{cases}
\end{equation}
where $s\in(0,1)$, $p>1$, $\Omega\subset\mathbb R^N$ ($N\ge 1$) is a bounded domain, $q\in L^\infty(\Omega)$, and $(-\Delta_p)^s$ denotes the fractional $p$-Laplacian, which, on smooth functions $u$, can be written as
\begin{equation}\label{eq:classical-Deltaps}
(-\Delta_p)^su(x):=c_{N,s,p}\lim_{\varepsilon\to 0^+}\int_{(B_\varepsilon(x))^c}\frac{|u(x)-u(y)|^{p-2}(u(x)-u(y))}{|x-y|^{N+ps}}dy,\quad x\in\Omega,
\end{equation}
where $c_{N,s,p}>0$ is the usual normalization constant introduced in the Introduction.

We will prove the following strong comparison principle.
\begin{theorem}\label{thm:main}
Let $(u,v)$ be a weak solution of \eqref{eq:main}. If $u,\,v\in C(\Omega)$ and
\begin{equation}\label{eq:integrability}
\int_\Omega\int_\Omega\frac{\Big||v(x)-v(y)|^{p-2}(v(x)-v(y))-|u(x)-u(y)|^{p-2}(u(x)-u(y))\Big|}{|x-y|^{N+sp}}dxdy<\infty,
\end{equation} 
then either $u<v$ in $\Omega$ or $u\equiv v$ in $\mathbb R^N$.
\end{theorem}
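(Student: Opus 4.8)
The plan is to argue by contradiction following the classical Vázquez-style strategy for strong maximum principles, adapted to the nonlocal nonlinear setting. Suppose $(u,v)$ is a weak solution of \eqref{eq:main} with $u,v\in C(\Omega)$ satisfying \eqref{eq:integrability}, that $u\not\equiv v$ in $\mathbb R^N$, but that $u(x_0)=v(x_0)$ for some $x_0\in\Omega$. Set $w:=v-u\ge 0$ in $\mathbb R^N$, so $w(x_0)=0$, i.e.\ $x_0$ is an interior minimum of $w$. The heart of the matter is to test the differential inequality in \eqref{eq:main} at the point $x_0$ and extract a contradiction from the sign of the nonlocal term.

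First I would make rigorous the pointwise evaluation: using the continuity of $u,v$ near $x_0$ together with the integrability hypothesis \eqref{eq:integrability}, I would justify that the quantity
\[
(-\Delta_p)^s v(x_0)-(-\Delta_p)^s u(x_0)
= c_{N,s,p}\,\mathrm{P.V.}\!\int_{\mathbb R^N}\frac{\Phi_p(v(x_0)-v(y))-\Phi_p(u(x_0)-u(y))}{|x_0-y|^{N+sp}}\,dy
\]
is well defined, where $\Phi_p(t):=|t|^{p-2}t$ is the increasing function appearing in \eqref{eq:classical-Deltaps}. Since $u(x_0)=v(x_0)=:c$ and $u\le v$ everywhere, for every $y$ we have $c-v(y)\le c-u(y)$, hence by monotonicity of $\Phi_p$,
\[
\Phi_p(v(x_0)-v(y))-\Phi_p(u(x_0)-u(y))=\Phi_p(c-v(y))-\Phi_p(c-u(y))\le 0 \quad\text{for a.e. } y\in\mathbb R^N.
\]
Therefore $(-\Delta_p)^s v(x_0)-(-\Delta_p)^s u(x_0)\le 0$. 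On the other hand, from the first line of \eqref{eq:main} tested appropriately and from $u(x_0)=v(x_0)$, the zero-order terms cancel: $q(x_0)\big(|v(x_0)|^{p-2}v(x_0)-|u(x_0)|^{p-2}u(x_0)\big)=0$, so the inequality in \eqref{eq:main} forces $(-\Delta_p)^s v(x_0)-(-\Delta_p)^s u(x_0)\ge 0$. Combining, $(-\Delta_p)^s v(x_0)=(-\Delta_p)^s u(x_0)$, which means the integrand above vanishes for a.e.\ $y$, i.e.\ $\Phi_p(c-v(y))=\Phi_p(c-u(y))$ for a.e.\ $y\in\mathbb R^N$. Since $\Phi_p$ is strictly increasing, this gives $v(y)=u(y)$ for a.e.\ $y\in\mathbb R^N$, and then by continuity $u\equiv v$ on $\Omega$ and (using that they agree a.e.) on all of $\mathbb R^N$ — contradicting $u\not\equiv v$. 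Hence the zero set of $w$ in $\Omega$ is empty unless $w\equiv 0$, which is exactly the dichotomy in the statement.

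The point requiring the most care is the passage from the \emph{weak} formulation of \eqref{eq:main} to the \emph{pointwise} identity at $x_0$: a priori a weak solution need not be regular enough for \eqref{eq:classical-Deltaps} to make classical sense at $x_0$. I expect the main obstacle to be showing that, under the continuity of $u,v$ and the integrability condition \eqref{eq:integrability}, one may localize the weak formulation around $x_0$ — for instance by choosing a sequence of nonnegative test functions concentrating at $x_0$, or by a Lebesgue-point / approximate-identity argument — so as to evaluate both sides at $x_0$ and legitimately split off and cancel the $q$-term. The condition \eqref{eq:integrability} is precisely what guarantees that the nonlocal difference operator applied to the pair $(u,v)$ produces an $L^1$ kernel against $|x-y|^{-N-sp}$, which is what makes the pointwise manipulation and the a.e.\ vanishing conclusion legitimate; handling the principal-value near $x_0$ is routine once continuity is in hand since the numerator is controlled by a modulus of continuity. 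The remaining steps — monotonicity of $\Phi_p$, strict monotonicity to upgrade ``$\le$'' to ``$=$'' and then to identity of the functions — are then immediate.
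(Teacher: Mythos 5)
Your endgame coincides with the paper's, but you reach it by a different mechanism, and the one step you defer is in fact the bulk of the actual proof. The paper never evaluates the two operators separately at $x_0$: it linearizes the difference of kernels, writing $A(v(x)-v(y))-A(u(x)-u(y))=a(x,y)(w(x)-w(y))$ with $A(h)=|h|^{p-2}h$, $w=v-u$, and $a(x,y)=(p-1)\int_0^1|u_t(x)-u_t(y)|^{p-2}\,dt\ge 0$ symmetric. It then splits the double integral paired with $\varphi(x)-\varphi(y)$ into the pieces over $\Omega\times\Omega$, $\Omega\times(\mathbb R^N\setminus\Omega)$, etc., uses the symmetry of $a$ and an interchange of the order of integration on $\Omega\times\Omega$ --- this is exactly where \eqref{eq:integrability} is invoked --- to arrive at $\int_\Omega\bigl(2\int_{\mathbb R^N}a(x,y)\frac{w(x)-w(y)}{|x-y|^{N+sp}}dy+q(x)b(x)w(x)\bigr)\varphi(x)\,dx\ge 0$ for all $0\le\varphi\in C_c^\infty(\Omega)$, and finally applies Lemma \ref{lem:FLCV-positive} to obtain a pointwise a.e.\ inequality. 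At a zero $x_0$ of $w$ the zero-order term vanishes and the integrand $-a(x_0,y)w(y)$ has a sign, forcing $a(x_0,y)w(y)=0$ a.e.; a short extra argument shows $a(x_0,y)=0$ already implies $w(y)=0$. Your direct use of the monotonicity of $\Phi_p$ on the un-linearized difference is a legitimate, and arguably cleaner, substitute for the sign of $a$ and for that final case analysis.

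The substantive gap is the passage you label as ``the point requiring the most care'' and then do not carry out. The weak formulation pairs the kernel against $\varphi(x)-\varphi(y)$, not against $\varphi(x)$; to produce the single integral $\int_{\mathbb R^N}\frac{\Phi_p(c-v(y))-\Phi_p(c-u(y))}{|x_0-y|^{N+sp}}dy$ you must first convert the antisymmetric pairing into $2\int_\Omega\bigl(\int_{\mathbb R^N}\cdots dy\bigr)\varphi(x)\,dx$, and the Fubini-type interchange on $\Omega\times\Omega$ needed for this is precisely what \eqref{eq:integrability} licenses. Continuity of $u,v$ alone does not make the inner integral converge at a given $x$ (the numerator is only $o(1)$ against $|x-y|^{-N-sp}$), and \eqref{eq:integrability} yields finiteness of the inner integral only for a.e.\ $x$, not at your chosen $x_0$; your observation that the integrand has a sign at $x_0$ saves well-definedness in $[-\infty,0]$ but not the identification of that quantity with what the weak formulation controls. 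You then still face the point that the resulting inequality holds only a.e.\ while $x_0$ is a specific point, so a limiting argument along admissible points (using $w\ge 0$, $w(x_0)=0$, and continuity) is required to close the loop. So: right idea and right monotonicity, but the weak-to-pointwise localization must actually be executed; the paper's symmetrization followed by Lemma \ref{lem:FLCV-positive} is one concrete way to do it, and your sketch should be fleshed out along those lines.
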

The proof of this theorem is based on an argument first introduced in \cite{M}. We observe that the previous strong comparison principle for $(-\Delta_p)^s$, \cite[Theorem 1.1]{J}, requires different regularity assumptions on $u$ and $v$ and uses a different proof technique.
Before proving Theorem \ref{thm:main}, we introduce the functional spaces and the main definitions that will be useful to work with weak solutions, and prove a preliminary lemma.

For every $s\in (0,1)$ and $p\in (1,\infty)$, we define
$$
W^{s,p}(\mathbb{R}^N):=\left\{u\in L^p(\mathbb R^N)\,:\,\int_{\mathbb R^N}\int_{\mathbb R^N}\frac{|u(x)-u(y)|^p}{|x-y|^{N+sp}}dxdy<\infty\right\},
$$
$$
W^{s,p}_0(\Omega):=\{u\in W^{s,p}(\mathbb{R}^N)\,:\,u\equiv 0 \mbox{ in }\mathbb R^N\setminus\Omega\},
$$
$$
\widetilde{W}^{s,p}(\Omega):=\left\{u\in L^p_{\mathrm{loc}}(\mathbb R^N)\,:\,\int_{\Omega}\int_{\mathbb R^N}\frac{|u(x)-u(y)|^p}{|x-y|^{N+sp}}dxdy<\infty\right\}.
$$

\begin{definition}
A function $u\in \widetilde{W}^{s,p}(\Omega)$ is a weak solution of $(-\Delta_p)^s u \ge (\le) 0$ in $\Omega$ if 
$$
\int_{\mathbb R^N}\int_{\mathbb R^N}\frac{|u(x)-u(y)|^{p-2}(u(x)-u(y))(\varphi(x)-\varphi(y))}{|x-y|^{N+sp}}dxdy\ge(\le)0
$$
holds for every $0 \le \varphi\in W^{s,p}_0(\Omega)$.
Consequently, a function $u\in \widetilde{W}^{s,p}(\Omega)$ is a weak solution of $(-\Delta_p)^s u = 0$ in $\Omega$ if 
$$
\int_{\mathbb R^N}\int_{\mathbb R^N}\frac{|u(x)-u(y)|^{p-2}(u(x)-u(y))(\varphi(x)-\varphi(y))}{|x-y|^{N+sp}}dxdy = 0
$$
holds for every $\varphi\in W^{s,p}_0(\Omega)$.

A couple $(u,v)\in (\widetilde{W}^{s,p}(\Omega))^2$ is a weak solution of \eqref{eq:main} if the inequality
$$
\begin{aligned}
c_{N,s,p}&\int_{\mathbb R^N}\int_{\mathbb R^N}\frac{|u(x)-u(y)|^{p-2}(u(x)-u(y))(\varphi(x)-\varphi(y))}{|x-y|^{N+sp}}dxdy\\
&\hspace{7cm} + \int_\Omega q(x)|u(x)|^{p-2}u(x)\varphi(x) dx\\
& \le c_{N,s,p}\int_{\mathbb R^N}\int_{\mathbb R^N}\frac{|v(x)-v(y)|^{p-2}(v(x)-v(y))(\varphi(x)-\varphi(y))}{|x-y|^{N+sp}}dxdy\\
&\hspace{7cm} + \int_\Omega q(x)|v(x)|^{p-2}v(x)\varphi(x) dx\\
\end{aligned}
$$
holds for every $0\le\varphi\in W^{s,p}_0(\Omega)$, and $u\le v$ a.e. in $\mathbb R^N$.
\end{definition}

\begin{lemma}\label{lem:FLCV-positive}
If $f\in L^1_{\mathrm{loc}}(\Omega)$ is such that 
$$
\int_\Omega f(x)\varphi(x)dx\ge 0 \quad\mbox{for every }0\le \varphi \in C^\infty_c(\Omega),
$$
then $f\ge 0$ a.e. in $\Omega$.
\end{lemma}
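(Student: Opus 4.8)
The plan is to argue by contradiction, using a family of mollifiers as test functions together with the Lebesgue differentiation theorem. Suppose the conclusion fails, i.e.\ the set $\{f<0\}$ has positive Lebesgue measure. Since $f\in L^1_{\mathrm{loc}}(\Omega)$, almost every point of $\Omega$ is a Lebesgue point of $f$, so one can pick a point $x_0\in\Omega$ that is simultaneously a Lebesgue point of $f$ and satisfies $f(x_0)<0$.

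Next I would fix a standard mollifier, namely a function $\rho\in C^\infty_c(B_1(0))$ with $\rho\ge 0$ and $\int_{\mathbb R^N}\rho=1$, and set $\rho_\varepsilon(z):=\varepsilon^{-N}\rho(z/\varepsilon)$. For $\varepsilon>0$ small enough that $\overline{B_\varepsilon(x_0)}\subset\Omega$, the function $\varphi_\varepsilon(x):=\rho_\varepsilon(x-x_0)$ is an admissible test function: it lies in $C^\infty_c(\Omega)$ and is nonnegative. The hypothesis gives $\int_\Omega f(x)\varphi_\varepsilon(x)\,dx\ge 0$ for every such $\varepsilon$. On the other hand, by the choice of $x_0$ as a Lebesgue point, $\int_\Omega f(x)\rho_\varepsilon(x-x_0)\,dx\to f(x_0)$ as $\varepsilon\to 0^+$ (the standard fact that mollification converges to the value of an $L^1_{\mathrm{loc}}$ function at its Lebesgue points). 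Passing to the limit yields $f(x_0)\ge 0$, contradicting $f(x_0)<0$; hence $|\{f<0\}|=0$, i.e.\ $f\ge 0$ a.e.\ in $\Omega$.

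An alternative, more hands-on route avoids Lebesgue points: after reducing to the case that $A:=\{f<-1/k\}$ has positive measure inside some open $\Omega'\subset\subset\Omega$ for a suitable $k\in\mathbb N$, one uses inner regularity of the Lebesgue measure to pick a compact $C\subset A$ with $|C|>0$ and, by absolute continuity of the integral of $|f|$ on $\Omega'$, an open set $U$ with $C\subset U\subset\subset\Omega$ and $\int_{U\setminus C}|f|<\tfrac{1}{2k}|C|$. Choosing $\varphi\in C^\infty_c(\Omega)$ with $0\le\varphi\le 1$, $\varphi\equiv 1$ on $C$ and $\mathrm{supp}\,\varphi\subset U$, one gets $\int_\Omega f\varphi\le-\tfrac1k|C|+\tfrac1{2k}|C|<0$, again a contradiction.

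There is no serious obstacle here: the statement is a version of the fundamental lemma of the calculus of variations. The only points requiring a little care are purely measure-theoretic, namely ensuring that the support of the test function stays inside $\Omega$ (handled by localizing to a compactly contained subdomain) and justifying the convergence of the mollified integrals via the Lebesgue differentiation theorem. I would present the mollifier argument as the main proof for its brevity.
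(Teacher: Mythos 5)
Your mollifier argument is correct: at a Lebesgue point $x_0$ of $f$ with $f(x_0)<0$ (such a point exists if $|\{f<0\}|>0$, since Lebesgue points have full measure for $f\in L^1_{\mathrm{loc}}$), the admissible test functions $\varphi_\varepsilon(x)=\rho_\varepsilon(x-x_0)$ give $0\le\int_\Omega f\varphi_\varepsilon\to f(x_0)$, a contradiction; the only hypotheses needed are that $\rho$ is bounded, nonnegative and supported in the unit ball, and that $\overline{B_\varepsilon(x_0)}\subset\Omega$ for small $\varepsilon$, both of which you address. This is, however, a genuinely different route from the paper's proof, which avoids the Lebesgue differentiation theorem altogether: there one exhausts $\Omega$ by subdomains $\Omega_n$, approximates the indicator function $1_{\{f^->0\}}$ in $L^1(\Omega_n)$ by smooth compactly supported test functions, and passes to the limit via dominated convergence to get $\int_{\Omega_n\cap\{f^->0\}}f^-\le 0$, concluding with monotone convergence in $n$ that $\int_{\{f^->0\}}f^-=0$. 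Your approach buys brevity and uses only textbook facts (it is the classical fundamental lemma of the calculus of variations in its standard form); the paper's approach buys a more elementary toolkit (density of $C^\infty_c$ in $L^1$ plus the basic convergence theorems) and works globally with the negative part $f^-$ rather than through a pointwise contradiction. Your second, ``hands-on'' sketch with a compact subset of $\{f<-1/k\}$ and a smooth Urysohn-type cutoff is in fact the closer relative of the paper's argument, since both ultimately test against a smoothed indicator of a bad set; it too is complete as sketched, provided (as you note) the cutoff is supported in a compactly contained open set where $f$ is integrable, so that absolute continuity of the integral applies.
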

\begin{proof}
Let $\Omega=\bigcup_{n=1}^\infty\Omega_n$, with $\Omega_n\subset\Omega_{n+1}$, 
and let $K_n$ be compact sets such that 
\begin{equation}\label{eq:set-incl}
K_n\subset\Omega_n\subset K_{n+1}\quad\mbox{for every }n\in\mathbb N.
\end{equation}
Since $f\in L^1_{\mathrm{loc}}(\Omega)$, $f\in L^1(K_n)$ and consequently, by \eqref{eq:set-incl}, $f\in L^1(\Omega_n)$ for every $n$. In particular, $f^+,\,f^-\in L^1(\Omega_n)$ and also $1_{\{f^->0\}}\in L^1(\Omega_n)$. Now, fix $n\in\mathbb N$. By density, there exists a sequence $(\varphi_j)\subset C^\infty_c(\Omega_n)$ such that $\varphi_j\to 1_{\{f^->0\}}$ in $L^1(\Omega_n)$. Therefore, passing if necessary to a subsequence, and using the Dominated Convergence Theorem, we get
\begin{equation}\label{eq:fphij}
\lim_{j\to\infty}\int_{\Omega_n}f\varphi_jdx=\int_{\Omega_n}\lim_{j\to\infty}(f^+-f^-)\varphi_jdx=-\int_{\Omega_n\cap \{f^->0\}}f^- dx.
\end{equation}
Now, by assumption, for every $j\in\mathbb N$, $\int_{\Omega_n}f\varphi_j dx= \int_{\Omega}f\varphi_j dx\ge 0$,
being $\varphi_j\in C^\infty_c(\Omega_n)\subset C^\infty_c(\Omega)$.
Hence, by \eqref{eq:fphij}, 
\begin{equation}\label{eq:int-f-}
\int_{\Omega_n\cap \{f^->0\}}f^- dx\le 0.
\end{equation}
We can now pass to the limit in $n$ to obtain
\begin{align*}
0&\ge \lim_{n\to\infty}\int_{\Omega_n\cap\{f^->0\}}f^- dx=\lim_{n\to\infty}\int_{\{f^->0\}}f^- 1_{\Omega_n} dx=
\int_{\{f^->0\}}f^- 1_{\Omega} dx\\
&=\int_{\{f^->0\}}f^- dx\ge 0,
\end{align*}
where we have used \eqref{eq:int-f-}, $1_{\Omega_n}\to 1_{\Omega}$ a.e., and the Monotone Convergence Theorem.
This immediately gives $|\{f^->0\}|=0$ and concludes the proof. 
\end{proof}

\begin{proof}[$\bullet$ Proof of Theorem \ref{thm:main}.] 
We introduce the notation $A(h):=|h|^{p-2}h$ for every $h\in \mathbb R$. Since $(u,v)$ is a weak solution of \eqref{eq:main}, we get for every $0\le\varphi\in C^\infty_c(\Omega)$
\begin{equation}\label{eq:subtracting}
\begin{aligned}
I:=\frac{1}{c_{N,s,p}}&\int_\Omega q(x)(|u(x)|^{p-2}u(x)-|v(x)|^{p-2}v(x)|)\varphi(x)dx\\
&\le\int_{\mathbb{R}^N}\int_{\mathbb{R}^N}\{|v(x)-v(y)|^{p-2}(v(x)-v(y))-|u(x)-u(y)|^{p-2}(u(x)-u(y))\}\\
& \hspace{8.5cm} \cdot\frac{\varphi(x)-\varphi(y)}{|x-y|^{N+sp}}dx dy\\
&= \int_{\mathbb{R}^N}\int_{\mathbb{R}^N}\{A(v(x)-v(y))-A(u(x)-u(y))\}\frac{\varphi(x)-\varphi(y)}{|x-y|^{N+sp}}dx dy.
\end{aligned}
\end{equation}
On the other hand, let $u_t:=tv+(1-t)u$ for every $t\in [0,1]$ and $w:=v-u$, then by straightforward calculations we have
\begin{align*}
A(v(x)-v(y))-A(u(x)-u(y))&=\int_0^1\frac{d}{dt}A(u_t(x)-u_t(y))dt\\
&=(p-1)\left(\int_0^1|u_t(x)-u_t(y)|^{p-2}dt\right)(w(x)-w(y))\\
&=:a(x,y)(w(x)-w(y)).
\end{align*}
We observe that $a(x,y)=a(y,x)$ for every $x,\,y\in\mathbb R^N$.
Hence, continuing the estimate in \eqref{eq:subtracting} and using that $\varphi\equiv 0$ in $\mathbb R^N\setminus\Omega$, we have
\begin{align*}
I&\le \int_{\mathbb{R}^N}\int_{\mathbb{R}^N}a(x,y)(w(x)-w(y))\frac{(\varphi(x)-\varphi(y))}{|x-y|^{N+sp}}dx dy\\
&= \int_{\mathbb R^N\setminus\Omega}\int_{\Omega}\dots dx dy+  \int_{\Omega}\int_{\Omega}\dots dx dy
\int_{\Omega}\int_{\mathbb R^N\setminus\Omega}\dots dx dy.
\end{align*}
By the symmetry of $a(\cdot,\cdot)$, we notice that the first and the third integral in the last expression are equal, so that we can write
\begin{equation}\label{eq:manipulation}
\begin{aligned}
I&\le 2\int_{\mathbb{R}^N\setminus\Omega}\left(\int_{\Omega}a(x,y)(w(x)-w(y))\frac{\varphi(x)}{|x-y|^{N+sp}}dx\right) dy\\
&\qquad \qquad +\int_{\Omega}\int_{\Omega} a(x,y)(w(x)-w(y))\frac{(\varphi(x)-\varphi(y))}{|x-y|^{N+sp}}dx dy\\
&=\int_{\mathbb{R}^N}\left(\int_{\Omega}a(x,y)(w(x)-w(y))\frac{\varphi(x)}{|x-y|^{N+sp}}dx\right) dy\\
&\qquad \qquad 
+\int_{\mathbb{R}^N\setminus\Omega}\left(\int_{\Omega}a(x,y)(w(x)-w(y))\frac{\varphi(x)}{|x-y|^{N+sp}}dx\right) dy\\
&\qquad \qquad \qquad
-\int_{\Omega}\left(\int_{\Omega}a(x,y)(w(x)-w(y))\frac{\varphi(y)}{|x-y|^{N+sp}}dx\right) dy.
\end{aligned}
\end{equation}
As for the last integral, in view of \eqref{eq:integrability}, we can manipulate it in the following way
\begin{align*}
-\int_{\Omega}&\left(\int_{\Omega}a(x,y)(w(x)-w(y))\frac{\varphi(y)}{|x-y|^{N+sp}}dx\right) dy
\\
&=
-\int_{\Omega}\left(\int_{\Omega}a(x,y)(w(x)-w(y))\frac{\varphi(y)}{|x-y|^{N+sp}}dy\right) dx\\
&=\int_{\Omega}\left(\int_{\Omega}a(x,y)(w(x)-w(y))\frac{\varphi(x)}{|x-y|^{N+sp}}dx\right) dy,
\end{align*}
therefore, we can sum up the last two integrals in \eqref{eq:manipulation}
to get in conclusion
\begin{equation}\label{eq:RHS}
\begin{aligned}
I&\le2\int_{\mathbb{R}^N}\left(\int_{\Omega}a(x,y)\frac{(w(x)-w(y))}{|x-y|^{N+sp}}\varphi(x)dx\right) dy\\		
&= 2\int_{\Omega}\left(\int_{\mathbb{R}^N}a(x,y)\frac{(w(x)-w(y))}{|x-y|^{N+sp}}dy\right) \varphi(x) dx
\end{aligned}
\end{equation}
for every $0\le \varphi \in C^\infty_c(\Omega)$. Arguing in a similar way, we can re-write the integral $I$ as follows
\begin{equation}\label{eq:LHS}
\begin{aligned}
I&=\frac{1}{c_{N,s,p}}\int_\Omega q(x)(A(u(x))-A(v(x)))\varphi(x)dx\\
&=\frac{1}{c_{N,s,p}}\int_\Omega q(x)\left(-\int_0^1\frac{d}{dt}A(u_t(x))dt\right)\varphi(x)dx\\
&=-\int_\Omega q(x)\left(\frac{(p-1)\int_0^1|u_t(x)|^{p-2}dt}{c_{N,s,p}}\right)w(x)\varphi(x)dx=: -\int_\Omega q(x) b(x) w(x)\varphi(x)dx.
\end{aligned}
\end{equation}
Combining together \eqref{eq:RHS} and \eqref{eq:LHS}, we get
$$
\int_{\Omega}\left(2\int_{\mathbb{R}^N}a(x,y)\frac{(w(x)-w(y))}{|x-y|^{N+sp}}dy+q(x) b(x) w(x)\right) \varphi(x) dx\ge 0
$$
for every $0\le \varphi \in C^\infty_c(\Omega)$. Thus, by Lemma \ref{lem:FLCV-positive}, this implies that 
$$
2\int_{\mathbb R^N}a(x,y)\frac{w(x)-w(y)}{|x-y|^{N+sp}}dy\ge -q(x) b(x) w(x) \quad\mbox{for a.e. }x\in \Omega.
$$
Now, suppose by contradiction that there exists $x_0\in \Omega$ such that $w(x_0)=0$, then 
\begin{equation}
\label{eq:eq0}
\int_{\mathbb R^N}a(x_0,y)\frac{-w(y)}{|x_0-y|^{N+sp}}dy \ge 0.
\end{equation}
Since $w=v-u\ge 0$ a.e. in $\mathbb R^N$ and $a(x,y)\ge 0$ for a.e. $x,\,y\in \mathbb R^N$, 
\eqref{eq:eq0} implies that $a(x_0,y)w(y)=0$ for a.e. $y\in\mathbb R^N$. 

We are now ready to conclude. We observe that, if $a(x_0,y)=0$ for some $y\in \mathbb R^N$, then $w(y)=0$. Indeed, by straightforward calculations, if
$$
a(x_0,y)=(p-1)\int_0^1|u(x_0)-u(y)+t(u(y)-v(y))|^{p-2}dt=0,
$$
then
$$
u(x_0)-u(y)+t(u(y)-v(y))=0\quad\mbox{for every }t\in [0,1],
$$
which gives $u(y)=v(y)$, or equivalently $w(y)=0$. So, we have proved that $a(x_0,y)w(y)=0$ for a.e. $y\in\mathbb R^N$ is equivalent to  $w(y)=0$ for a.e. $y\in\mathbb R^N$, which concludes the proof.
\end{proof}

Arguing as in \cite[Lemma 9]{LL}, we have the following weak comparison principle. We stress that, with respect to Theorem \ref{thm:main}, we need to ask $u$ to be continuous in the whole space and $q$ to be non-negative.
\begin{lemma}
Let $(u,v)$ be a weak solution of 
$$
\begin{cases}
(-\Delta_p)^s u + q(x)|u|^{p-2}u \le (-\Delta_p)^s v + q(x)|v|^{p-2}v\quad&\mbox{in }\Omega,\\
u \le v&\mbox{in }\mathbb R^N\setminus\Omega,
\end{cases}
$$
where $0\le q\in L^\infty(\Omega)$.
If $u,\,v\in C(\mathbb R^N)$, then $u\le v$ also in $\Omega$.
\end{lemma}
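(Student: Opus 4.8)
The plan is to run the standard weak comparison argument for the fractional $p$-Laplacian (cf.\ \cite[Lemma 9]{LL}): test the weak inequality with the positive part $\varphi:=(u-v)^+$ and exploit that both $h\mapsto A(h):=|h|^{p-2}h$ and $t\mapsto t^+$ are nondecreasing. First I would verify that $\varphi$ is an admissible test function. Since $u,v\in C(\mathbb{R}^N)$, the function $\varphi$ is continuous, nonnegative, and bounded on the compact set $\overline\Omega$; it vanishes identically on $\mathbb{R}^N\setminus\Omega$ because there $u\le v$; and, using the pointwise inequality $|\varphi(x)-\varphi(y)|\le|(u-v)(x)-(u-v)(y)|\le|u(x)-u(y)|+|v(x)-v(y)|$ together with $u,v\in\widetilde W^{s,p}(\Omega)$, its Gagliardo seminorm over $\mathbb{R}^N\times\mathbb{R}^N$ is finite, so that $\varphi\in W^{s,p}_0(\Omega)$.

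Next I would insert $\varphi$ into the weak inequality and, after the same rearrangement and symmetrization of the double integral used in the proof of Theorem~\ref{thm:main}, arrive at
$$\int_{\mathbb{R}^N}\!\int_{\mathbb{R}^N}\frac{\bigl(A(u(x)-u(y))-A(v(x)-v(y))\bigr)\bigl(\varphi(x)-\varphi(y)\bigr)}{|x-y|^{N+sp}}\,dx\,dy\le\frac{1}{c_{N,s,p}}\int_\Omega q(x)\bigl(A(v(x))-A(u(x))\bigr)\varphi(x)\,dx.$$
On the right-hand side, on the set $\{\varphi>0\}=\{u>v\}$ the monotonicity of $A$ gives $A(u(x))\ge A(v(x))$, and since $q\ge0$ and $\varphi\ge0$ the right-hand side is $\le 0$. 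On the left-hand side, because $A$ is strictly increasing the factor $A(u(x)-u(y))-A(v(x)-v(y))$ has the same sign as $(u(x)-u(y))-(v(x)-v(y))=(u-v)(x)-(u-v)(y)$, while $\bigl((u-v)(x)-(u-v)(y)\bigr)\bigl(\varphi(x)-\varphi(y)\bigr)\ge 0$ because $t\mapsto t^+$ is nondecreasing; hence the integrand on the left is pointwise nonnegative and the left-hand side is $\ge0$. Consequently both sides vanish, and the left-hand double integral, having a nonnegative integrand, must have that integrand equal to zero for a.e.\ $(x,y)\in\mathbb{R}^N\times\mathbb{R}^N$.

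To conclude I would show $\varphi\equiv 0$. Let $E:=\{u>v\}$; it is open by the continuity of $u-v$ and satisfies $E\subset\Omega$ (since $u\le v$ on $\mathbb{R}^N\setminus\Omega$), so $\mathbb{R}^N\setminus E$ has infinite Lebesgue measure. If $x\in E$ and $y\in\mathbb{R}^N\setminus E$, then $(u-v)(x)>0\ge(u-v)(y)$, hence $u(x)-u(y)>v(x)-v(y)$, so by the strict monotonicity of $A$ the factor $A(u(x)-u(y))-A(v(x)-v(y))$ is strictly positive; since moreover $\varphi(x)=(u-v)(x)>0$ and $\varphi(y)=0$, the integrand of the double integral is strictly positive at each such $(x,y)$. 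As this integrand is a.e.\ zero, we get $|E|\,|\mathbb{R}^N\setminus E|=0$, whence $|E|=0$ and, $E$ being open, $E=\emptyset$. Therefore $u\le v$ in $\mathbb{R}^N$, and in particular in $\Omega$.

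The one step I expect to require real care is the admissibility of $\varphi=(u-v)^+$ as a test function, i.e.\ the membership $\varphi\in W^{s,p}_0(\Omega)$: this is exactly where the hypotheses $u,v\in C(\mathbb{R}^N)$ (stronger than the $C(\Omega)$ of Theorem~\ref{thm:main}) and $u\le v$ outside $\Omega$ are used, and it is established as in \cite[Lemma 9]{LL}. Everything else is elementary monotonicity; note in particular that, unlike in Theorem~\ref{thm:main}, the sign condition $q\ge0$ is invoked only to make the right-hand side above nonpositive, while the integrability hypothesis \eqref{eq:integrability} is not needed, since all the integrals appearing are those already built into the weak formulation.
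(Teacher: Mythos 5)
Your proof is correct, and it follows the same basic strategy as the paper (and as \cite[Lemma 9]{LL}): test the weak inequality with $\varphi=(u-v)^+$, check that this is an admissible element of $W^{s,p}_0(\Omega)$, and exploit monotonicity to force the double integral to vanish. The execution differs in two respects. The paper first linearizes the difference $A(v(x)-v(y))-A(u(x)-u(y))=a(x,y)\,(w(x)-w(y))$ with the nonnegative symmetric weight $a(x,y)$, exactly as in the proof of Theorem \ref{thm:main}, arrives at \eqref{eq:ineq}, and then, after inserting $\varphi=w^-$ and using $q\ge 0$, simply refers to \cite[Lemma 9]{LL} for the final step; you instead work directly with the strict monotonicity of $A$ and of $t\mapsto t^+$ to see that the integrand $\bigl(A(u(x)-u(y))-A(v(x)-v(y))\bigr)\bigl(\varphi(x)-\varphi(y)\bigr)$ is pointwise nonnegative, and you carry out the endgame explicitly: a.e.\ vanishing of the integrand, strict positivity on $E\times(\mathbb R^N\setminus E)$ with $E=\{u>v\}\subset\Omega$, hence $|E|=0$ and, by openness, $E=\emptyset$. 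What your route buys is a self-contained argument (no appeal to \cite{LL} for the conclusion, no linearization), and it even delivers $u\le v$ in all of $\mathbb R^N$; what the paper's route buys is economy, since it recycles the machinery already set up for Theorem \ref{thm:main}. Two small remarks: the admissibility of $(u-v)^+$ really rests on $u\le v$ outside $\Omega$ together with $u,v\in\widetilde W^{s,p}(\Omega)$ (plus Hölder to see that the two Gagliardo-type integrals in the weak formulation are absolutely convergent before subtracting them), whereas the hypothesis $u,v\in C(\mathbb R^N)$ is what lets you upgrade the a.e.\ conclusion to the pointwise one via openness of $\{u>v\}$ — so your emphasis on continuity as the admissibility issue is slightly misplaced, though nothing is wrong; and your observation that \eqref{eq:integrability} is not needed here is consistent with the statement and with the paper.
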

\begin{proof}
Reasoning as in the first part of the proof of Theorem \ref{thm:main}, we get for every $0\le \varphi\in W^{s,p}_0(\Omega)$
\begin{equation}\label{eq:ineq}
\int_{\mathbb R^N}\int_{\mathbb R^N} a(x,y)(w(x)-w(y))\frac{\varphi(x)-\varphi(y)}{|x-y|^{N+sp}}dx\,dy\ge-\int_\Omega q(x)b(x)w(x)\varphi(x)dx,
\end{equation}
with the same definitions for $a$, $b$, and $w=v-u$. Now, following the idea in \cite[Lemma 9]{LL}, we choose $\varphi:=(u-v)^+=w^-$ and observe that 
$$
w\varphi=(w^+-w^-)w^-=-(w^-)^2\le 0.
$$
Hence, putting $\varphi=w^-$, and using that $q\ge0$, we get from \eqref{eq:ineq}
$$\int_{\mathbb R^N}\int_{\mathbb R^N} a(x,y)(w(x)-w(y))\frac{w^-(x)-w^-(y)}{|x-y|^{N+sp}}dx\,dy\ge0.
$$
The proof now can be completed exactly as in \cite[Lemma 9]{LL}.
\end{proof}

Combining the previous lemma with Theorem \ref{thm:main}, we get the following. 
\begin{corollary}
Let $(u,v)$ be a weak solution of 
$$
\begin{cases}
(-\Delta_p)^s u + q(x)|u|^{p-2}u \le (-\Delta_p)^s v + q(x)|v|^{p-2}v\quad&\mbox{in }\Omega,\\
u \le v&\mbox{in }\mathbb R^N\setminus\Omega,
\end{cases}
$$
where $0\le q\in L^\infty(\Omega)$. If $u,\,v\in C(\mathbb R^N)$ and \eqref{eq:integrability} holds, then 
either $u<v$ in $\Omega$ or $u\equiv v$ in $\mathbb R^N$.
\end{corollary}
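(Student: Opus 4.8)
The plan is simply to chain the weak comparison principle stated in the previous lemma with the strong comparison principle of Theorem~\ref{thm:main}, using the former to upgrade the exterior inequality $u\le v$ into the global inequality required by the latter.

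First I would apply the weak comparison lemma to the pair $(u,v)$. Its hypotheses hold: $0\le q\in L^\infty(\Omega)$, the differential inequality is assumed in $\Omega$, $u,v\in C(\mathbb R^N)$, and $u\le v$ in $\mathbb R^N\setminus\Omega$. Its conclusion therefore gives $u\le v$ in $\Omega$ as well, hence $u\le v$ throughout $\mathbb R^N$.

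Next I would check that $(u,v)$ now fulfils the hypotheses of Theorem~\ref{thm:main}. The differential inequality in $\Omega$ is assumed, in the same weak sense as in the Definition, and, combined with the inequality $u\le v$ in $\mathbb R^N$ just obtained, this says precisely that $(u,v)$ is a weak solution of the system~\eqref{eq:main}. Moreover $u,v\in C(\mathbb R^N)$ implies in particular $u,v\in C(\Omega)$, and the integrability condition~\eqref{eq:integrability} is part of the hypotheses. Theorem~\ref{thm:main} then yields the dichotomy: either $u<v$ in $\Omega$, or $u\equiv v$ in $\mathbb R^N$, which is exactly the claim.

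There is essentially no obstacle here, since all the substance lies in the two results being combined; the only point deserving a moment's care is verifying that the weak comparison lemma genuinely promotes the boundary hypothesis ``$u\le v$ in $\mathbb R^N\setminus\Omega$'' to ``$u\le v$ in $\mathbb R^N$'', so that the notion of weak solution of~\eqref{eq:main} invoked in Theorem~\ref{thm:main} applies verbatim. If one wished to be completely self-contained one could also remark that the continuity hypothesis $u,v\in C(\mathbb R^N)$ is what makes the two lemmas compatible (Theorem~\ref{thm:main} only needs $C(\Omega)$, while the weak comparison lemma needs $C(\mathbb R^N)$), so no regularity is lost in passing from one to the other.
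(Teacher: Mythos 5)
Your argument is correct and is precisely the one the paper intends: the corollary is obtained by first invoking the weak comparison lemma (which needs $u,v\in C(\mathbb R^N)$ and $q\ge 0$) to promote $u\le v$ from $\mathbb R^N\setminus\Omega$ to all of $\mathbb R^N$, so that $(u,v)$ becomes a weak solution of \eqref{eq:main}, and then applying Theorem \ref{thm:main} under the integrability hypothesis \eqref{eq:integrability}. Nothing is missing.
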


\section{The fractional torsion problem}\label{Chap3}
Let $s\in(0,1)$. In this section we consider the following problem
\begin{equation}\label{eq:torsion-pb}
\begin{cases}
(-\Delta)^s u=1\quad&\mbox{in }B_R\\
u=0&\mbox{in }\mathbb R^N\setminus B_R,
\end{cases}
\end{equation}
where $B_R\subset \mathbb R^N$ is the ball of radius $R$ centered at the origin.

\begin{remark}
We observe that, at least formally, the $N$-dimensional fractional Laplacian $(-\Delta)^s_{N}$ of a function $u:\mathbb R^N\to \mathbb R$ can be expressed in terms of the 1-dimensional fractional Laplacian $(-\Delta)^s_1$ of related functions of one variable. Indeed, denoting simply $c_{N,s}:=c_{N,s,2}$, we get for every $x\in\mathbb R^N$
$$\begin{aligned}
\frac{1}{c_{N,s}}(-\Delta)^s_N u(x)&=\lim_{\varepsilon\to 0^+}\int_{\mathbb R^N\setminus B_\varepsilon(0)}\frac{u(x)-u(y)}{|x-y|^{N+2s}}dy=\lim_{\varepsilon\to 0^+}\int_\varepsilon^{+\infty}\left(\int_{\partial B_t(x)}\frac{u(x)-u(y)}{|x-y|^{N+2s}}d\sigma(y)\right)dt\\
&=\lim_{\varepsilon\to 0^+}\int_\varepsilon^{+\infty}\left(\int_{\partial B_1(x)}\frac{u(x)-u(x-t\nu)}{t^{2s+1}}d\sigma(\nu)\right)dt\\
&=\lim_{\varepsilon\to 0^+}\left\{\int_{\partial B_1(x)\cap\{x_N>0\}}\left(\int_\varepsilon^{+\infty}\frac{u(x)-u(x-t\nu)}{t^{2s+1}}dt\right)d\sigma(\nu)\right.\\
&\phantom{=}\qquad\quad\left.+\int_{\partial B_1(x)\cap\{x_N<0\}}\left(\int_\varepsilon^{+\infty}\frac{u(x)-u(x-t\nu)}{t^{2s+1}}dt\right)d\sigma(\nu)\right\}\\
&=\lim_{\varepsilon\to 0^+}\int_{\partial B_1(x)\cap\{x_N>0\}}\left(\int_{\mathbb R\setminus(-\varepsilon,\varepsilon)}\frac{u(x)-u(x-t\nu)}{|t|^{2s+1}}dt\right)d\sigma(\nu)\\
&=\int_{\partial B_1(x)\cap\{x_N>0\}}\left(\lim_{\varepsilon\to 0^+}\int_{\mathbb R\setminus(-\varepsilon,\varepsilon)}\frac{u(x)-u(x-t\nu)}{|t|^{2s+1}}dt\right)d\sigma(\nu)\\
&=\int_{\partial B_1(x)\cap\{x_N>0\}}\frac{1}{c_{1,s}}(-\Delta)^s_1 \psi_{\nu,x}(0)d\sigma(\nu),
\end{aligned}
$$
where $\psi_{\nu,x}(t):=u(x-t\nu)$ for every $t\in\mathbb R$. In particular, for $u_N(x)=(1-|x|^2)^s_+$, $\psi_{\nu,x}(t)=u_1(|x-t\nu|)$, and so, once it is proved that $(-\Delta)^s_1u_1$ is constant, one has immediately that also $(-\Delta)^s_Nu_N$ is constant. 
\end{remark}

In the light of the previous remark, from now on in the paper we consider only the case of dimension $N=1$, and drop all subscripts referring to the dimension. Moreover, for the sake of simplicity, we take the radius $R$ to be 1. In this setting, we give an alternative proof of the fact that the solution of \eqref{eq:torsion-pb} is given by $v_s(x):=\frac{\sin(\pi s)}{\pi c_s}(1-x^2)^s_+$, where  $c_s$ is the normalization constant for the fractional Laplacian in dimension one and is given by $c_s:=\frac{2^{2s}}{\sqrt{\pi}}\frac{\Gamma\left(\frac{1+2s}{2}\right)}{\Gamma(1-s)}s$, cf. for instance \cite[Remark 3.11]{CS}. We refer to \cite{Dyda} for a previous proof. 

\begin{theorem}\label{thm:frac-tor}
Let $N=1$ and $v_s(x):=\frac{\sin(\pi s)}{\pi c_s}u_s(x)$, with $u_s(x):=(1-x^2)^s_+$. Then $v_s$ is a $C^s([-1,1])$ solution of \eqref{eq:torsion-pb}.
\end{theorem}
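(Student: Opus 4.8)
The statement splits into two essentially independent assertions: the regularity $v_s\in C^s([-1,1])$, and the pointwise identity $(-\Delta)^s v_s\equiv 1$ on $(-1,1)$, equivalently $(-\Delta)^s u_s\equiv \frac{\pi c_s}{\sin(\pi s)}$ on $(-1,1)$. The regularity is routine: $u_s$ is real-analytic on $(-1,1)$, and near $x=\pm 1$ it factors as $u_s(x)=(1\mp x)^s(1\pm x)^s$ with the second factor smooth and bounded away from $0$, so $u_s$ is Hölder of exponent $s$ up to the endpoints; extended by $0$ it stays globally $C^{0,s}$, whence $v_s\in C^s([-1,1])$. For the pointwise identity, fix $x\in(-1,1)$ and pick $\rho>0$ with $\overline{B_\rho(x)}\subset(-1,1)$: on $B_\rho(x)$ the map $u_s$ is $C^2$, so the symmetrized integrand $|h|^{-1-2s}\big(2u_s(x)-u_s(x+h)-u_s(x-h)\big)=O(|h|^{1-2s})$ is integrable, while on the complement $|u_s|\le 1$ and $|x-y|^{-1-2s}$ is integrable; hence the principal value defining $(-\Delta)^s u_s(x)$ exists and equals $-\tfrac{c_s}{2}\int_{\mathbb R}|h|^{-1-2s}\big(u_s(x+h)+u_s(x-h)-2u_s(x)\big)\,dh$.

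It then remains to compute this number. At $x=0$ one splits $\mathbb R$ into $(-1,1)$ and its complement: on the complement $u_s\equiv 0$ and $\int_{|y|>1}|y|^{-1-2s}\,dy=1/s$, while on $(-1,1)$ the substitution $y^2=t$ followed by one integration by parts turns $\int_0^1 y^{-1-2s}\big(1-(1-y^2)^s\big)\,dy$ into the Euler integral $\tfrac12\big(B(s,1-s)-\tfrac1s\big)=\tfrac12\big(\tfrac{\pi}{\sin(\pi s)}-\tfrac1s\big)$; this gives $(-\Delta)^s u_s(0)=2c_s\big(\tfrac{1}{2s}+\tfrac12(\tfrac{\pi}{\sin(\pi s)}-\tfrac1s)\big)=\frac{\pi c_s}{\sin(\pi s)}$, as announced in the Introduction. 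For a general $x\in(-1,1)$ the plan is to reduce the defining integral to one that is still explicitly computable. Concretely, I would apply the Möbius change of variables $y=\frac{x-z}{1-xz}$, which maps $(-1,1)$ onto itself, maps $\mathbb R\setminus[-1,1]$ onto itself, sends $y=x$ to $z=0$, and satisfies $1-y^2=\frac{(1-x^2)(1-z^2)}{(1-xz)^2}$, $x-y=\frac{(1-x^2)z}{1-xz}$, $dy=-\frac{1-x^2}{(1-xz)^2}\,dz$; together with the elementary identity $\frac{|1-xz|^{1+2s}}{(1-xz)^2}=|1-xz|^{2s-1}$ this collapses the integral to
$$\frac{(-\Delta)^s u_s(x)}{c_s}=\frac{1}{(1-x^2)^s}\left(\mathrm{p.v.}\!\int_{-1}^1\frac{(1-xz)^{2s}-(1-z^2)^s}{(1-xz)\,|z|^{1+2s}}\,dz+\int_{|z|>1}\frac{|1-xz|^{2s-1}}{|z|^{1+2s}}\,dz\right).$$
A further change of variables $z=1/w$ rewrites the tail integral as $\int_{-1}^1|w-x|^{2s-1}\,dw=\tfrac{1}{2s}\big((1+x)^{2s}+(1-x)^{2s}\big)$, while the first integral, after symmetrizing under $z\mapsto -z$ (which removes the principal value) and elementary manipulation, reduces to Euler Beta integrals; adding the pieces, the bracket equals $\frac{\pi}{\sin(\pi s)}(1-x^2)^s$, so $(-\Delta)^s u_s(x)=\frac{\pi c_s}{\sin(\pi s)}$ for every $x\in(-1,1)$, which is exactly the claim.

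The main obstacle is this last reduction: the change of variables must be made rigorous, in particular one must check that excising $B_\varepsilon(x)$ in the $y$ variable corresponds, as $\varepsilon\to 0$, to a neighbourhood of $z=0$ symmetric enough that the principal value is preserved (the remaining asymmetry is of order $\varepsilon^{2-2s}$ against the odd singular part of the integrand, hence negligible); moreover the two summands in the displayed bracket are individually divergent near $z=0$ and only their sum converges, so the reduction has to be carried out on the combined integrand. Once this bookkeeping is done, what remains are elementary Beta-function computations. One could alternatively try to bypass the explicit evaluation at general $x$ by differentiating under the integral sign, reducing the constancy of $(-\Delta)^s u_s$ on $(-1,1)$ to the identity $(-\Delta)^s\big[x\,(1-x^2)^{s-1}_+\big]\equiv 0$ in $(-1,1)$; however this auxiliary identity seems to require essentially the same computation, so I would follow the direct route above.
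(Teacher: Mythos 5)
Your setup is sound and in fact coincides with the paper's key device: the M\"obius substitution $y=\frac{x-z}{1-xz}$ (the paper's $t=\frac{x-y}{1-xy}$), the identities $1-y^2=\frac{(1-x^2)(1-z^2)}{(1-xz)^2}$, $x-y=\frac{(1-x^2)z}{1-xz}$, and the observation that the asymmetry of the excised interval after the change of variables contributes only $O(\varepsilon^{2-2s})$ are all correct, and your transformed expression for $(-\Delta)^s u_s(x)/c_s$ checks out (as does the tail computation via $z=1/w$ and the fully worked value at $x=0$). Your symmetrization in $z$, which makes the inner integral absolutely convergent for every $s\in(0,1)$, would even let you avoid the paper's case split between $s<\tfrac12$ and $s\ge\tfrac12$. (One slip: the two summands in your displayed bracket are \emph{not} individually divergent -- the tail integral is finite and the symmetric principal value of the first integral converges on its own; what diverge individually are the two pieces of the numerator $(1-xz)^{2s}$ and $(1-z^2)^s$ inside the first integral.)

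The genuine gap is the evaluation for general $x\in(-1,1)$, which is the actual content of the theorem and is only asserted (``after symmetrizing \dots and elementary manipulation, reduces to Euler Beta integrals; adding the pieces, the bracket equals $\frac{\pi}{\sin(\pi s)}(1-x^2)^s$''). Unlike at $x=0$, the symmetrized integrand contains the factors $(1\mp xz)^{2s-1}$ and $(1-x^2z^2)^{-1}$, so it is \emph{not} a Beta integral: to evaluate it you must expand in powers of $x$ (or invoke an equivalent hypergeometric identity), integrate term by term to get Gamma-quotient coefficients, justify the interchange of sum and integral, and then resum the series -- via $\sum_{k\ge0}\binom{s-1}{k}(-1)^k x^{2k}=(1-x^2)^{s-1}$ and $\Gamma(s)\Gamma(1-s)=\frac{\pi}{\sin\pi s}$ -- to recognize $\frac{\pi}{\sin(\pi s)}(1-x^2)^s$. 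This expansion-plus-resummation is precisely where the paper spends most of its proof (it even evaluates the individual integrals $\int_0^1(1-t^2)^{s-1}t^{2k+1-2s}dt$ symbolically and then uses the Pochhammer/binomial identity), so it cannot be dismissed as elementary bookkeeping; your proof is incomplete until this step is carried out, since the correctly computed value at $x=0$ alone says nothing about constancy on $(-1,1)$.
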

\begin{proof} For every $x\in\mathbb R\setminus(-1,1)$, $u_s(x)=0$. Moreover, for every $x\in (-1,1)$,
$$
\begin{aligned}
(-\Delta)^su_s(-x)&=c_{s}\lim_{\varepsilon\to 0^+}\int_{\mathbb R\setminus(-x-\varepsilon,-x+\varepsilon)}\frac{u_s(-x)-u_s(y)}{|-x-y|^{1+2s}}dy\\
&=c_{s}\lim_{\varepsilon\to 0^+}\int_{\mathbb R\setminus(-x-\varepsilon,-x+\varepsilon)}\frac{u_s(x)-u_s(-y)}{|x+y|^{1+2s}}dy\\
&=c_{s}\lim_{\varepsilon\to 0^+}\int_{\mathbb R\setminus(x-\varepsilon,x+\varepsilon)}\frac{u_s(x)-u_s(z)}{|x-z|^{1+2s}}dz=(-\Delta)^su_s(x).
\end{aligned}
$$
Now, let $x\in(0,1)$ and $\varepsilon>0$, then
$$
\begin{aligned}
\int_{\mathbb R\setminus (x-\varepsilon,x+\varepsilon)}\frac{u_s(x)-u_s(y)}{|x-y|^{1+2s}}dy
&=\int_{-1}^{x-\varepsilon}\frac{(1-x^2)^s-(1-y^2)^s}{|x-y|^{1+2s}}dy\\
&\quad+\int_{x+\varepsilon}^{1}\frac{(1-x^2)^s-(1-y^2)^s}{|x-y|^{1+2s}}dy\\
&\quad+(1-x^2)^s\int_{\mathbb R\setminus (-1,1)}\frac{1}{|x-y|^{1+2s}}dy=:I_1(x)+I_2(x)+I_3(x).
\end{aligned}
$$
As for the last integral, we immediately get
$$
\begin{aligned}
\frac{I_3(x)}{(1-x^2)^s}&=\frac{1}{x^{1+2s}}\left(\int_{-\infty}^{-1}\frac{1}{|1-y/x|^{1+2s}}dy+\int_1^{\infty}\frac{1}{|1-y/x|^{1+2s}}dy\right)\\
&=\frac{1}{2sx^{2s}}\left[\left(1+\frac{1}{x}\right)^{-2s}+\left(\frac{1}{x}-1\right)^{-2s}\right]=\frac{1}{2s}\left[\frac{1}{(1-x)^{2s}}+\frac{1}{(1+x)^{2s}}\right].
\end{aligned}
$$
We manipulate and integrate by parts $I_1(x)$ to obtain
\begin{equation}\label{eq:I1}
\begin{aligned}
I_1(x)&=(1-x^2)^s\int_{-1}^{x-\varepsilon}\frac{1}{(x-y)^{1+2s}}dy-\int_{-1}^{x-\varepsilon}\frac{(1-y^2)^s}{(x-y)^{1+2s}}dy\\
&=\frac{(1-x^2)^s}{2s}\left(\frac{1}{\varepsilon^{2s}}-\frac{1}{(1+x)^{2s}}\right)-\left\{
\frac{(1-(x-\varepsilon)^2)^s}{2s\varepsilon^{2s}}+\int_{-1}^{x-\varepsilon}\frac{(1-y^2)^{s-1}}{(x-y)^{2s}}y\,dy\right\}.
\end{aligned}
\end{equation}
Similarly, for $I_2(x)$ we have
\begin{equation}\label{eq:I2}
\begin{aligned}
I_2(x)&=(1-x^2)^s\int_{x+\varepsilon}^1\frac{1}{(y-x)^{1+2s}}dy-\int_{x+\varepsilon}^1\frac{(1-y^2)^s}{(y-x)^{1+2s}}dy\\
&=\frac{(1-x^2)^s}{2s}\left(\frac{1}{\varepsilon^{2s}}-\frac{1}{(1-x)^{2s}}\right)-\left\{
\frac{(1-(x+\varepsilon)^2)^s}{2s\varepsilon^{2s}}-\int_{x+\varepsilon}^1\frac{(1-y^2)^{s-1}}{(y-x)^{2s}}y\,dy\right\}.
\end{aligned}
\end{equation}
Now, it is straightforward to see that, as $\varepsilon\to0^+$, 
$$
\begin{gathered}
(1-(x-\varepsilon)^2)^s=(1-x^2)^s+\frac{2s x}{(1-x^2)^{1-s}}\varepsilon+O(\varepsilon^2),\\
(1-(x+\varepsilon)^2)^s=(1-x^2)^s-\frac{2s x}{(1-x^2)^{1-s}}\varepsilon+O(\varepsilon^2).
\end{gathered}
$$
Thus, combining them with \eqref{eq:I1} and \eqref{eq:I2}, we obtain as $\varepsilon\to 0^+$
$$
I_1(x)=-\frac{(1-x^2)^s}{2s(1+x)^{2s}}-
\frac{\varepsilon^{1-2s}x}{(1-x^2)^{1-s}}-\int_{-1}^{x-\varepsilon}\frac{(1-y^2)^{s-1}}{(x-y)^{2s}}y\,dy+O(\varepsilon^{2(1-s)})
$$
and
$$
I_2(x)=-\frac{(1-x^2)^s}{2s(1-x)^{2s}}+\frac{\varepsilon^{1-2s}x}{(1-x^2)^{1-s}}+\int_{x+\varepsilon}^1\frac{(1-y^2)^{s-1}}{(y-x)^{2s}}y\,dy+O(\varepsilon^{2(1-s)}). 
$$
Altogether, we have
\begin{equation}\label{eq:deltasu}
\begin{aligned}
(-\Delta)^su_s(x)&=c_s\lim_{\varepsilon\to 0^+}(I_1(x)+I_2(x)+I_3(x))\\
&=c_s\lim_{\varepsilon\to 0^+}\left(-\int_{-1}^{x-\varepsilon}\frac{(1-y^2)^{s-1}}{(x-y)^{2s}}y\,dy+\int_{x+\varepsilon}^1\frac{(1-y^2)^{s-1}}{(y-x)^{2s}}y\,dy+O(\varepsilon^{2(1-s)})\right).
\end{aligned}
\end{equation}

Now, we distinguish two cases depending on whether $s\in(0,\frac{1}{2})$ or $s\in[\frac{1}{2},1)$. 

$\bullet$ \underline{\it Case $s\in(0,\frac{1}{2})$}. In this case, all integrals involved in the fractional Laplacian of $u_s$ are convergent. So, in this case, we have 
$$
(-\Delta)^su_s(x)=c_s\left(-\int_{-1}^{x}\frac{(1-y^2)^{s-1}}{(x-y)^{2s}}y\,dy+\int_{x}^1\frac{(1-y^2)^{s-1}}{(y-x)^{2s}}y\,dy\right).
$$
Now, by the following change of variable $t=\frac{x-y}{1-xy}$, we have
$$
\begin{aligned}
\int_{-1}^{x}\frac{(1-y^2)^{s-1}}{(x-y)^{2s}}y\,dy&=-\frac{1}{(1-x^2)^{2s-1}}\int_0^1\frac{(x-t)((1-tx)^2-(x-t)^2)^{s-1}}{t^{2s}(1-tx)}dt\\
&=\frac{1}{(1-x^2)^{2s-1}}\int_0^1\frac{(t-x)[(1-t^2)(1-x^2)]^{s-1}}{t^{2s}(1-tx)}dt\\
&=\frac{1}{(1-x^2)^{s}}\int_0^1\frac{(t-x)(1-t^2)^{s-1}}{t^{2s}(1-tx)}dt
\end{aligned}
$$
and similarly
$$
\begin{aligned}
\int_{x}^1\frac{(1-y^2)^{s-1}}{(y-x)^{2s}}y\,dy&=\frac{1}{(1-x^2)^{s}}\int_{-1}^0\frac{(x-t)(1-t^2)^{s-1}}{t^{2s}(1-tx)}dt\\
&=\frac{1}{(1-x^2)^{s}}\int_0^1\frac{(x+t)(1-t^2)^{s-1}}{t^{2s}(1+tx)}dt.
\end{aligned}
$$
So that, summing up, we have
$$
\begin{aligned}
(-\Delta)^su_s(x)&=\frac{c_s}{(1-x^2)^{s}}\int_0^1\frac{(1-t^2)^{s-1}}{t^{2s}}\left[\frac{t-x}{1-tx}+\frac{t+x}{1+tx}\right]dt\\
&=2c_s(1-x^2)^{1-s}\int_0^1\frac{(1-t^2)^{s-1}}{t^{2s-1}(1-t^2x^2)}dt.
\end{aligned}
$$
We can now integrate using power series to get
$$
\begin{aligned}
(-\Delta)^su_s(x)&=2c_s(1-x^2)^{1-s}\int_0^1\frac{(1-t^2)^{s-1}}{t^{2s-1}}\sum_{k=0}^\infty(tx)^{2k} dt\\
&=2c_s(1-x^2)^{1-s}\sum_{k=0}^\infty \left(x^{2k}\int_0^1\frac{(1-t^2)^{s-1}}{t^{2s-1-2k}} dt\right)\\
&=2c_s(1-x^2)^{1-s}\sum_{k=0}^\infty \left(x^{2k}\frac{\Gamma(s)\Gamma(k-s+1)}{2\Gamma(k+1)}\right)\\
&=c_s\Gamma(s)\Gamma(1-s)(1-x^2)^{1-s}\sum_{k=0}^\infty (-1)^{k}{{s-1}\choose{k}}x^{2k}\\
&=c_s\Gamma(s)\Gamma(1-s)(1-x^2)^{1-s}(1-x^2)^{s-1}=c_s\Gamma(s)\Gamma(1-s)=c_s\frac{\pi}{\sin s\pi}
\end{aligned}
$$
where we have calculated the integral $\int_0^1\frac{(1-t^2)^{s-1}}{t^{2s-1-2k}} dt$ symbolically by Wolfram Mathematica \cite{Mathematica}, and we have used that $\Gamma(k+1)=k!$, the following property of the Gamma function (cf. for instance \cite[formula (1.47)]{SKM}), with $z=1-s$:
$$
\frac{\Gamma(z+k)}{\Gamma(z)}=(z)_k\quad\mbox{for }z>-k,\quad z\neq 0,-1,-2,\dots,
$$
the relation between definition of the Pochhammer symbol and the binomial coefficient (cf. for instance \cite[formula (1.48)]{SKM})
$$
{{-z}\choose{k}}=\frac{(-1)^k(z)_k}{k!},
$$
and finally that $\Gamma(s)\Gamma(1-s)=\frac{\pi}{\sin s\pi}$. The conclusion, in this case, follows at once for $v_s$, using the linearity of the fractional Laplacian.

$\bullet$ \underline{\it Case $s\in[\frac{1}{2},1)$}. In this case, the situation is technically more involved because, singularly taken, the integrals that appear in \eqref{eq:deltasu} are not convergent and one has to take carefully into account the cancellations. Using the change of variables $t:=\frac{x-y}{1-xy}$, we get
$$
-\int_{-1}^{x-\varepsilon}\frac{(1-y^2)^{s-1}}{(x-y)^{2s}}y\,dy=\frac{1}{(1-x^2)^s}\left(\int_{\frac{\varepsilon}{1-x(x-\varepsilon)}}^1\frac{(1-t^2)^{s-1}}{t^{2s-1}(1-tx)}dt-x\int_{\frac{\varepsilon}{1-x(x-\varepsilon)}}^1\frac{(1-t^2)^{s-1}}{t^{2s}(1-tx)}dt\right),
$$
where the first integral on the right-hand side is convergent as $\varepsilon\to 0^+$. Similarly, 
$$
\int_{x+\varepsilon}^1\frac{(1-y^2)^{s-1}}{(y-x)^{2s}}y\,dy=\frac{1}{(1-x^2)^s}\left(\int_{\frac{\varepsilon}{1-x(x+\varepsilon)}}^1\frac{(1-t^2)^{s-1}}{t^{2s-1}(1+tx)}dt+x\int_{\frac{\varepsilon}{1-x(x+\varepsilon)}}^1\frac{(1-t^2)^{s-1}}{t^{2s}(1+tx)}dt\right),
$$
where, again, the first integral on the right-hand side is convergent as $\varepsilon\to 0^+$.
Therefore, \eqref{eq:deltasu} can be re-written in the form
$$
\begin{aligned}
(-\Delta)^su_s(x)=&\frac{c_s}{(1-x^2)^s}\int_0^1 \frac{(1-t^2)^{s-1}}{t^{2s-1}}\left(\frac{1}{1-tx}+\frac{1}{1+tx}\right)dt\\
&+\frac{c_sx}{(1-x^2)^s}\lim_{\varepsilon\to 0^+}\left\{\int_{\frac{\varepsilon}{1-x(x+\varepsilon)}}^1\frac{(1-t^2)^{s-1}}{t^{2s}(1+tx)}dt-\int_{\frac{\varepsilon}{1-x(x-\varepsilon)}}^1\frac{(1-t^2)^{s-1}}{t^{2s}(1-tx)}dt+O(\varepsilon^{2(1-s)})\right\}\\
=&:J_1(x)+J_2(x).
\end{aligned}
$$
As for $J_1(x)$ one can integrate using power series as already done in the case $s<1/2$ and obtain
$$
\begin{aligned}
J_1(x)&=\frac{2c_s}{(1+x^2)^s}\int_0^1\frac{(1-t^2)^{s-1}}{t^{2s-1}}\sum_{k=0}^\infty(tx)^{2k}dt
=\frac{2c_s}{(1+x^2)^s}\sum_{k=0}^\infty \left(x^{2k}\int_0^1\frac{(1-t^2)^{s-1}}{t^{2s-1-2k}}dt\right)\\
&=c_s\frac{\Gamma(s)\Gamma(1-s)}{1-x^2}.
\end{aligned}
$$
We now consider $J_2(x)$. We use again power series to get for every $\varepsilon>0$
$$
\int_{\frac{\varepsilon}{1-x(x+\varepsilon)}}^1\frac{(1-t^2)^{s-1}}{t^{2s}}\frac{1}{1+tx}dt=\sum_{k=0}^\infty(-1)^k x^k\int_{\frac{\varepsilon}{1-x(x+\varepsilon)}}^1\frac{(1-t^2)^{s-1}}{t^{2s-k}}dt
$$
and similarly
$$
\int_{\frac{\varepsilon}{1-x(x-\varepsilon)}}^1\frac{(1-t^2)^{s-1}}{t^{2s}}\frac{1}{1-tx}dt=\sum_{k=0}^\infty x^k\int_{\frac{\varepsilon}{1-x(x-\varepsilon)}}^1\frac{(1-t^2)^{s-1}}{t^{2s-k}}dt. 
$$

We observe that the integrals in the series are all convergent as $\varepsilon\to 0^+$ except for the first ones, where $k=0$. So, we isolate these first terms and calculate, for every $\varepsilon>0$: 
$$
\begin{aligned}
\int_{\frac{\varepsilon}{1-x(x+\varepsilon)}}^1\frac{(1-t^2)^{s-1}}{t^{2s}(1+tx)}dt&-\int_{\frac{\varepsilon}{1-x(x-\varepsilon)}}^1\frac{(1-t^2)^{s-1}}{t^{2s}(1-tx)}dt\\
&=\int_{\frac{\varepsilon}{1-x(x+\varepsilon)}}^1\frac{(1-t^2)^{s-1}}{t^{2s}}dt-\int_{\frac{\varepsilon}{1-x(x-\varepsilon)}}^1\frac{(1-t^2)^{s-1}}{t^{2s}}dt\\
&\quad+\sum_{k=1}^\infty\left\{(-1)^k \int_{\frac{\varepsilon}{1-x(x+\varepsilon)}}^1\frac{(1-t^2)^{s-1}}{t^{2s-k}}dt-\int_{\frac{\varepsilon}{1-x(x-\varepsilon)}}^1\frac{(1-t^2)^{s-1}}{t^{2s-k}}dt\right\}x^k
\end{aligned}
$$
Now, let $F(t)$ be a primitive of $f(t):=\frac{(1-t^2)^{s-1}}{t^{2s}}$, then clearly
\begin{equation}\label{eq:f-f}
\int_{\frac{\varepsilon}{1-x(x+\varepsilon)}}^1 f(t)dt-\int_{\frac{\varepsilon}{1-x(x-\varepsilon)}}^1 f(t)dt = F\left(\frac{\varepsilon}{1-x(x-\varepsilon)}\right)-F\left(\frac{\varepsilon}{1-x(x+\varepsilon)}\right).
\end{equation}
Such a primitive can be expressed in terms of the hypergeometric function $_2F_1$ as follows:
$$
F(t)=\frac{t^{1-2s} {_2F_1}(\frac{1}{2}-s,1-s,\frac{3}{2}-s;t^2)}{1-2s}=\frac{t^{1-2s}}{1-2s}\left(1+O(t^2)\right)\quad\mbox{as }t\to 0.
$$
Inserting this expansion in \eqref{eq:f-f}, by straightforward calculations we get
$$
\int_{\frac{\varepsilon}{1-x(x+\varepsilon)}}^1 f(t)dt-\int_{\frac{\varepsilon}{1-x(x-\varepsilon)}}^1 f(t)dt = -2x\left(\frac{\varepsilon}{1-x^2}\right)^{2-2s}+o(\varepsilon^{2-2s})=O(\varepsilon^{2(1-s)}).
$$
In particular,  $\lim_{\varepsilon\to 0^+}\big(\int_{\frac{\varepsilon}{1-x(x+\varepsilon)}}^1 f(t)dt-\int_{\frac{\varepsilon}{1-x(x-\varepsilon)}}^1 f(t)dt\big)$ is finite. Moreover, we show below that it is finite also the sum of the following series 
$$
\begin{aligned}
\sum_{k=1}^\infty\lim_{\varepsilon\to 0^+}&\left\{(-1)^k \int_{\frac{\varepsilon}{1-x(x+\varepsilon)}}^1\frac{(1-t^2)^{s-1}}{t^{2s-k}}dt-\int_{\frac{\varepsilon}{1-x(x-\varepsilon)}}^1\frac{(1-t^2)^{s-1}}{t^{2s-k}}dt\right\}x^k\\
&=\sum_{k=1}^\infty((-1)^k-1)x^k\int_0^1\frac{(1-t^2)^{s-1}}{t^{2s-k}}dt=\frac{\Gamma(s)}{2}\sum_{k=0}^\infty((-1)^{k+1}-1)x^{k+1}\frac{\Gamma\left(\frac{k-2s+2}{2}\right)}{\Gamma\left(\frac{k+2}{2}\right)}\\
&=\frac{\Gamma(s)}{2}x\sum_{k=0}^\infty(-2)x^{2k}\frac{\Gamma\left(\frac{2k-2s+2}{2}\right)}{\Gamma\left(\frac{2k+2}{2}\right)}=-\Gamma(s)x\sum_{k=0}^\infty x^{2k}\frac{\Gamma(k-s+1)}{\Gamma(k+1)}\\
&=-\Gamma(s)\Gamma(1-s)x\sum_{k=0}^\infty {{s-1}\choose{k}}(-1)^k x^{2k},
\end{aligned}
$$
where we have calculated the integral $\int_0^1\frac{(1-t^2)^{s-1}}{t^{2s-k}}dt$ symbolically by Wolfram Mathematica \cite{Mathematica}, and we have used the sum of the series $\sum_{k=0}^\infty x^{2k}\frac{\Gamma(k-s+1)}{\Gamma(k+1)}$ already calculated for the case $s<1/2$.
Therefore, it is possible to pass to the limit as $\varepsilon\to 0^+$ in the expression of $J_2(x)$ under the series, to get altogether,
$$
\begin{aligned}
J_2(x)&=\frac{c_sx}{(1-x^2)^s}\lim_{\varepsilon\to 0^+}\left\{\int_{\frac{\varepsilon}{1-x(x+\varepsilon)}}^1\frac{(1-t^2)^{s-1}}{t^{2s}(1+tx)}dt-\int_{\frac{\varepsilon}{1-x(x-\varepsilon)}}^1\frac{(1-t^2)^{s-1}}{t^{2s}(1-tx)}dt+O(\varepsilon^{2(1-s)})\right\}\\
&=\frac{c_sx}{(1-x^2)^s}\left\{\lim_{\varepsilon\to 0^+}\left(O(\varepsilon^{2(1-s)})\right)+\sum_{k=1}^\infty((-1)^k-1)x^k\int_0^1\frac{(1-t^2)^{s-1}}{t^{2s-k}}dt\right\}\\
&=-c_s \Gamma(s)\Gamma(1-s)\frac{x^2}{(1-x^2)^s}\sum_{k=0}^\infty {{s-1}\choose{k}}(-1)^kx^{2k}\\
&=-c_s \Gamma(s)\Gamma(1-s)\frac{x^2}{(1-x^2)^s}(1-x^2)^{s-1}
=-c_s \Gamma(s)\Gamma(1-s)\frac{x^2}{(1-x^2)}.
\end{aligned}
$$
In conclusion, 
$$
(-\Delta)^su(x)=J_1(x)+J_2(x)=c_s\frac{\Gamma(s)\Gamma(1-s)}{1-x^2}-c_s \Gamma(s)\Gamma(1-s)\frac{x^2}{(1-x^2)}=c_s\Gamma(s)\Gamma(1-s),
$$
which proves the thesis also in this case.
\end{proof}

\section{The fractional $p$-Laplacian of $(1-|x|^{\frac{p}{p-1}})^s_+$. Preliminaries.}\label{Chap4}
Let $s\in(0,1)$, $p > 1$, and denote by 
$$
u_{s,p}(x):=(1-|x|^{m})^s_+,\quad m:=\frac{p}{p-1}.
$$ 
Having in mind that, for $p=2$, the fractional Laplacian of $u_{s,2}(x)=(1-|x|^2)^s_+$ is constant in $(-1,1)$, see for instance Section \ref{Chap3}, and that $-\Delta_p(1-|x|^{\frac{p}{p-1}})$ is constant in $(-1,1)$, see for instance \cite{CF}, it is tempting to conjecture that also $(-\Delta_p)^s u_{s,p}$ is constant in $(-1,1)$. In the next section we verify numerically that this conjecture is false. 

To this aim, we first prove in this section some preliminary results.

\begin{proposition}\label{prop:Wsp}
For every $s\in (0,1)$ and $p>1$, $u_{s,p}\in W^{s,p}(\mathbb R)$. 
\end{proposition}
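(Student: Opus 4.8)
The goal is to show $u_{s,p}(x) = (1-|x|^m)^s_+ \in W^{s,p}(\mathbb R)$, i.e.\ that $u_{s,p}\in L^p(\mathbb R)$ (which is immediate, since $u_{s,p}$ is bounded and supported in $[-1,1]$) and that the Gagliardo seminorm
$$
[u_{s,p}]_{s,p}^p := \int_{\mathbb R}\int_{\mathbb R}\frac{|u_{s,p}(x)-u_{s,p}(y)|^p}{|x-y|^{1+sp}}\,dx\,dy
$$
is finite. The plan is to split $\mathbb R\times\mathbb R$ into the regions determined by whether $x,y$ lie in $(-1,1)$ or in its complement, and estimate each piece separately. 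Writing $I=(-1,1)$ and using the symmetry of the integrand, it suffices to bound: (i) the integral over $I\times I$; (ii) the integral over $I\times(\mathbb R\setminus I)$ (the ``tail'' piece, where $u_{s,p}(y)=0$).

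\smallskip
For piece (i), the key is a local H\"older-type bound on $u_{s,p}$: since $t\mapsto t^s$ is $s$-H\"older on $[0,\infty)$ and $t\mapsto 1-|t|^m$ is Lipschitz on $[-1,1]$ (as $m=\frac{p}{p-1}>1$), one gets $|u_{s,p}(x)-u_{s,p}(y)|\le C|x-y|^s$ for $x,y\in I$. Hence the $I\times I$ integrand is bounded by $C^p|x-y|^{sp}/|x-y|^{1+sp}=C^p|x-y|^{-1}$, which is \emph{not} integrable near the diagonal --- so this crude bound is not enough and must be refined. The standard fix is to combine it with the trivial bound $|u_{s,p}(x)-u_{s,p}(y)|\le 2\|u_{s,p}\|_\infty$: for $|x-y|\le 1$ use the H\"older bound but note that near the diagonal one actually needs the better local regularity. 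In fact the clean route is to observe that $u_{s,p}$ is locally Lipschitz in the interior of $I$ and behaves like $\mathrm{dist}(x,\partial I)^s$ near $\pm1$; away from $\pm 1$ the Lipschitz bound gives integrand $\lesssim |x-y|^{p}/|x-y|^{1+sp}=|x-y|^{p-1-sp}$ which is locally integrable since $p-1-sp>-1$. The contribution near the endpoints $\pm1$ is controlled by the model computation $\int_0^\delta\int_0^\delta \frac{|a^s-b^s|^p}{|a-b|^{1+sp}}\,da\,db<\infty$, which is a classical fact (it is exactly the statement that $t^s_+\in W^{s,p}_{\mathrm{loc}}$, valid because $s<1$, equivalently $sp<p$); I would cite or reproduce this one-dimensional estimate via the change of variables $b=ta$.

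\smallskip
For piece (ii), $u_{s,p}(y)=0$ for $|y|\ge 1$, so the integrand is $\dfrac{u_{s,p}(x)^p}{|x-y|^{1+sp}}$ and the $y$-integral over $\{|y|\ge1\}$ gives, for $x\in(0,1)$, a quantity comparable to $(1-x)^{-sp}+(1+x)^{-sp}\le C(1-x)^{-sp}$. It then remains to check $\int_0^1 u_{s,p}(x)^p(1-x)^{-sp}\,dx<\infty$; near $x=1$ one has $u_{s,p}(x)^p=(1-x^m)^{sp}\sim (m(1-x))^{sp}$, so the integrand behaves like $(1-x)^{sp}(1-x)^{-sp}=(1-x)^0=1$, which is integrable on $(0,1)$. (The symmetric analysis near $x=-1$ is identical.)

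\smallskip
\textbf{Main obstacle.} The only genuinely delicate point is the diagonal behaviour in piece (i) at the two boundary points $x=\pm1$, where $u_{s,p}$ is merely $C^s$ and not better; everything else reduces to elementary integrability of power functions. I expect to handle it by isolating neighbourhoods of $\pm 1$, rescaling, and invoking the one-variable lemma that $t\mapsto t^s_+$ lies in $W^{s,p}$ locally whenever $s\in(0,1)$. Away from the endpoints $u_{s,p}$ is smooth (indeed $C^1$), so there the seminorm integrand is harmless. Assembling the finitely many bounded contributions then yields $[u_{s,p}]_{s,p}<\infty$, completing the proof.
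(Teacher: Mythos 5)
Your proof is correct, and your tail estimate (piece (ii)) coincides with the paper's treatment of its integral $I_1$: integrating the kernel over $\mathbb R\setminus(-1,1)$ gives $\frac{1}{sp}\left[(1-y)^{-sp}+(1+y)^{-sp}\right]$, which together with $1-|y|^m\sim m(1\mp y)$ yields convergence. For the interior piece, however, you take a genuinely different route. The paper performs the change of variable $t=\frac{x-y}{1-xy}$ in the inner integral and then argues by asymptotics of the transformed integrand as $t\to 1$ and as $t\to 0$ (via the expansion \eqref{eq:asympt0}), reducing everything to one-dimensional integrals in $x$ that converge thanks to \eqref{eq:asympt}; this substitution is reused in Proposition \ref{prop:gxy}, so it has a unifying role in the paper. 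You instead exploit the local structure of $u_{s,p}$: it is $C^1$, hence Lipschitz, on compact subsets of $(-1,1)$ (note $m>1$, so there is no problem at $x=0$), where the integrand is $\lesssim |x-y|^{p-1-sp}$ with $p-1-sp>-1$, while near $\pm 1$ you reduce to the classical one-dimensional model $\int_0^\delta\int_0^\delta |a^s-b^s|^p|a-b|^{-1-sp}\,da\,db<\infty$, which your substitution $b=ta$ indeed proves, the inner integral being $\int_0^1(1-t^s)^p(1-t)^{-1-sp}\,dt<\infty$ precisely because $s<1$. Your argument is more elementary and avoids the paper's asymptotic bookkeeping, at the price of a case decomposition; to make it airtight you should spell out two small points: near $x=1$ write $u_{s,p}(x)=(1-x)^s\varphi(x)$ with $\varphi(x)=\bigl(\frac{1-x^m}{1-x}\bigr)^s$ Lipschitz and bounded away from zero there, so that $|u_{s,p}(x)-u_{s,p}(y)|\lesssim |(1-x)^s-(1-y)^s|+|x-y|$ and the model lemma genuinely applies to $u_{s,p}$ and not just to $(1-x)^s$; and observe that the remaining mixed region of $(-1,1)^2$, where $x$ and $y$ lie near different endpoints or one is near an endpoint and the other is not, is harmless since there $|x-y|$ is bounded below and the integrand is bounded on a bounded set.
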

\begin{proof}
Clearly, $u_{s,p}(x)=(1-|x|^{\frac{p}{p-1}})^s_+\in L^p(\mathbb R)$. To prove that $u_{s,p}\in W^{s,p}(\mathbb R)$, we need to show that $I:=\int_\mathbb R\int_\mathbb R\frac{|u_{s,p}(x)-u_{s,p}(y)|^p}{|x-y|^{1+sp}}dx\,dy<\infty$. We write the integral under consideration as follows:
$$
\begin{aligned}
I&=2\int_{\mathbb R\setminus (-1,1)}\left(\int_{-1}^1\frac{(1-|y|^{m})^{sp}}{|x-y|^{1+sp}}dy\right)dx + \int_{-1}^1\left(\int_{-1}^1\frac{\left|(1-|x|^{m})^{s}-(1-|y|^{m})^{s}\right|^p}{|x-y|^{1+sp}}dy\right)dx\\ 
&=:2I_1+I_2.
\end{aligned}
$$ 
The integral $I_1$ is convergent. Indeed, arguing as for the integral $I_3(x)$ in the proof of Theorem \ref{thm:frac-tor}, we get 
$$
I_1=\frac{1}{sp}\int_{-1}^1\left(\frac{1}{(1+y)^{sp}}+\frac{1}{(1-y)^{sp}}\right)(1-|y|^{m})^{sp}dy.
$$ 
Moreover, $\frac{(1-y^m)^{sp}}{(1-y)^{sp}}\sim m^{sp}$ as $y\to 1$, and similarly, $\frac{(1-|y|^{m})^{sp}}{(1+y)^{sp}}$ is bounded in a neighborhood of $y=-1$. 
To study the convergence of the integral $I_2$, it is more convenient to change variable and put $t=\frac{x-y}{1-xy}$ in the inner integral, to get
$$
I_2=\int_{-1}^1\left(\int_{-1}^1\left|(1-|x|^m)^s-\left(1-\left|\frac{x-t}{1-tx}\right|^m\right)^s\right|^p\frac{1}{|t|^{1+sp}(1-tx)^{1-sp}}dt\right)\frac{1}{(1-x^2)^{sp}}dx.
$$ 
Now, as $t\to 1$, 
the integrand of the inner integral has the following asymptotics
$$
\frac{\left|(1-|x|^m)^s-\left(1-\left|\frac{x-t}{1-tx}\right|^m\right)^s\right|^p}{|t|^{1+sp}(1-tx)^{1-sp}}\sim \frac{(1-|x|^m)^{sp}}{(1-x)^{1-sp}}
$$
and so, for $t\in (1-\varepsilon,1)$, $I_2$ has the same behavior of  
$$
\int_{-1}^1\frac{(1-|x|^m)^{sp}}{(1-x)^{1-sp}(1-x^2)^{sp}}dx, 
$$ 
which, in view of the fact that 
\begin{equation}\label{eq:asympt}
1-|x|^m=
\begin{cases}
m(x+1)+o(x+1)\quad&\mbox{as }x\to -1,\\
m(1-x)+o(x-1)\quad&\mbox{as }x\to 1,
\end{cases}
\end{equation}
is convergent. 

On the other side, as $t\to0$,
\begin{equation}\label{eq:asympt0}
\begin{gathered}
\frac{x-t}{1-tx}=(x-t)(1+tx+o(t))=x-t(1-x^2)+o(t)\\
\left|\frac{x-t}{1-tx}\right|^m=|x-t(1-x^2)+o(t)|^m=|x|^m(1-m\frac{1-x^2}{x}t+o(t))\\
\left(1-\left|\frac{x-t}{1-tx}\right|^m\right)^s=(1-|x|^m)^s\left(1+ms\frac{|x|^m(1-x^2)}{x(1-|x|^m)}t+o(t)\right)\\
\left|(1-|x|^m)^s-\left(1-\left|\frac{x-t}{1-tx}\right|^m\right)^s\right|^\alpha\sim (1-|x|^m)^{s\alpha}(ms)^\alpha\frac{|x|^{(m-1)\alpha}(1-x^2)^\alpha}{(1-|x|^m)^\alpha}|t|^\alpha,
\end{gathered}
\end{equation}
for any $\alpha>0$. 
Therefore, the integrand of the inner integral (in $dt$) of $I_2$ has the following asymptotics as $t\to 0$
$$
\frac{\left|(1-|x|^m)^s-\left(1-\left|\frac{x-t}{1-tx}\right|^m\right)^s\right|^p}{|t|^{1+sp}(1-tx)^{1-sp}}\sim (1-|x|^m)^{sp}(ms)^p\frac{|x|^{(m-1)p}(1-x^2)^p}{(1-|x|^m)^p}\frac{1}{|t|^{1-p(1-s)}},
$$
and so the integral in $dt$ is convergent. Finally, for $t$ in a neighborhood of $0$, $I_2$ has the same behavior of 
$$
\int_{-1}^1 \frac{|x|^{(m-1)p}(1-x^2)^{p(1-s)}}{(1-|x|^m)^{p(1-s)}}dx,
$$
which again in view of \eqref{eq:asympt} converges.
\end{proof}

\begin{remark}\label{Remarkone}
Arguing as in the first part of the proof of Theorem \ref{thm:frac-tor}, for the fractional $p$-Laplacian of $u_{s,p}=(1-|x|^m)^s_+$, it is possible to calculate explicitly its value at $x=0$. Indeed, denoting by $c_{s,p}$ the normalization constant involved in the definition of the fractional $p$-Laplacian in dimension $1$, we get for every $x\in (-1,1)$
$$
\begin{aligned}
\frac{(-\Delta_p)^su_{s,p}(x)}{c_{s,p}}&=|u_{s,p}(x)|^{p-2}u_{s,p}(x)\int_{\mathbb{R}\setminus(-1,1)}\frac{1}{|x-y|^{1+sp}}dy\\
&\quad
+\lim_{\varepsilon\to 0^+}\int_{(-1,1)\setminus(-\varepsilon,\varepsilon)}\frac{|u_{s,p}(x)-u_{s,p}(y)|^{p-2}(u_{s,p}(x)-u_{s,p}(y))}{|x-y|^{1+sp}}dy\\
&=u_{s,p}(x)^{p-1}\left(\int_{-\infty}^{-1}\frac{1}{|x-y|^{1+sp}}dy+\int_1^{\infty}\frac{1}{|x-y|^{1+sp}}dy\right)\\
&\quad+\lim_{\varepsilon\to 0^+}\int_{(-1,1)\setminus(-\varepsilon,\varepsilon)}\frac{|u_{s,p}(x)-u_{s,p}(y)|^{p-2}(u_{s,p}(x)-u_{s,p}(y))}{|x-y|^{1+sp}}dy\\
&=\frac{(1-|x|^m)^{s(p-1)}}{sp}\left(\frac{1}{(1+x)^{sp}}+\frac{1}{(1-x)^{sp}}\right)\\
&\quad+\lim_{\varepsilon\to 0^+}\int_{(-1,1)\setminus(-\varepsilon,\varepsilon)}\frac{|u_{s,p}(x)-u_{s,p}(y)|^{p-2}(u_{s,p}(x)-u_{s,p}(y))}{|x-y|^{1+sp}}dy.
\end{aligned}
$$
At $x=0$, the previous expression becomes
\begin{equation}\label{eq:x=0}
\begin{aligned}
\frac{(-\Delta_p)^su_{s,p}(0)}{c_{s,p}}&=\frac{2}{sp}+\lim_{\varepsilon\to 0^+}\int_{(-1,1)\setminus(-\varepsilon,\varepsilon)}\frac{(1-(1-|y|^m)^s)^{p-1}}{|y|^{1+sp}}dy\\
&=\frac{2}{sp}+2\int_0^1\frac{(1-(1-y^m)^s)^{p-1}}{y^{1+sp}}dy,
\end{aligned}
\end{equation}
where the integral on the last line is meant in the generalized sense, it is convergent and, at least for some values of $s$ and $p$, can be explicitly expressed in terms of special functions. The value in \eqref{eq:x=0} can be taken as reference value for the numerical analysis.
\end{remark}

\begin{proposition}\label{prop:gxy}
For every $p>1$, $s\in(0,1-1/p)$, and for every $x\in (-1,1)$, the function 
\begin{equation}\label{eq:gx}
g_x(y):=\frac{|u_{s,p}(x)-u_{s,p}(y)|^{p-2}(u_{s,p}(x)-u_{s,p}(y))}{|x-y|^{1+sp}}
\end{equation}
has finite integral over $(-1,1)$. Moreover, if $2-p(1-s)<0$, $g_x$  belongs to the space $W^{r,q}(\mathbb R)$ whenever $q\ge 1$ and $0\le r<\min\{1,p(1-s)-2\}$.
\end{proposition}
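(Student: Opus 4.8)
\emph{Overall strategy.} In both assertions the idea is to isolate the points where $g_x$ fails to be smooth — namely $y=x$, where the kernel $|x-y|^{-(1+sp)}$ is singular, and $y=\pm1$, where $u_{s,p}$ is only H\"older-$s$ continuous — together with the behaviour as $|y|\to\infty$ for the second assertion, and to estimate the relevant integrals on a neighbourhood of each such point separately.

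\emph{Integrability over $(-1,1)$.} I would split $(-1,1)$ into a small interval around $x$, small intervals around $\pm1$, and the remaining compact set $K$, which is bounded away from $x$ and from $\pm1$. On $K$ the integrand $g_x$ is continuous, hence bounded. Near $y=\pm1$ one has $u_{s,p}(y)\to0$ while $u_{s,p}(x)$ is a fixed positive constant and $|x-y|$ stays bounded below, so $g_x$ is bounded there. Near $y=x$ with $x\neq0$ the function $u_{s,p}$ is real-analytic with $u_{s,p}'(x)\neq0$, so $|u_{s,p}(x)-u_{s,p}(y)|\asymp|x-y|$, whence $|g_x(y)|\lesssim|x-y|^{\,p-2-sp}=|x-y|^{\,p(1-s)-2}$, which is integrable at $y=x$ precisely when $p(1-s)-2>-1$, i.e. when $s<1-1/p$; for $x=0$ one uses instead $u_{s,p}(0)-u_{s,p}(y)=1-(1-|y|^m)^s\sim s|y|^m$, so $|g_0(y)|\lesssim|y|^{\,m(p-1)-1-sp}=|y|^{\,p-1-sp}$, integrable for every $s\in(0,1)$. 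This proves the first claim.

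\emph{The Sobolev--Slobodeckij regularity.} Assume now $\beta:=p(1-s)-2>0$. First $g_x\in L^q(\mathbb R)$: for $|y|>1$ one has $g_x(y)=u_{s,p}(x)^{p-1}|x-y|^{-(1+sp)}$, which decays like $|y|^{-(1+sp)}$ and hence is $q$-integrable since $q(1+sp)>1$; near $y=\pm1$ it is bounded; and near $y=x$ now $|g_x(y)|\lesssim|x-y|^{\beta}\to0$, so $g_x$ extends continuously across $y=x$. To bound the seminorm $[g_x]_{W^{r,q}}^q=\iint_{\mathbb R^2}|g_x(\xi)-g_x(\eta)|^q|\xi-\eta|^{-(1+rq)}\,d\xi\,d\eta$ I would again localise. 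Away from $\{x,-1,1\}$ the function $g_x$ is $C^1$ (using real-analyticity of $u_{s,p}$ off $\{0,\pm1\}$, the expansion $1-(1-|y|^m)^s\sim s|y|^m$ near $0$, and $\beta>0$), and for large $|y|$ both $g_x$ and $g_x'$ decay polynomially, so the contribution to the double integral of pairs $(\xi,\eta)$ both in such a region, or one there and one elsewhere, converges for every $0\le r<1$. The decisive contributions come from the two corner types: near $y=x$ (with $x\neq0$) one has $g_x(y)=C(x)\,\mathrm{sgn}(x-y)\,|x-y|^{\beta}\bigl(1+O(|x-y|)\bigr)$ with $C(x):=|u_{s,p}'(x)|^{p-2}u_{s,p}'(x)\neq0$, while near $y=\pm1$, using \eqref{eq:asympt} together with $A(u_{s,p}(x)-u_{s,p}(y))=u_{s,p}(x)^{p-1}-(p-1)u_{s,p}(x)^{p-2}u_{s,p}(y)+O\bigl(u_{s,p}(y)^2\bigr)$ and $u_{s,p}(y)=(m(1\mp y))_+^s(1+O(1\mp y))$, one gets $g_x(y)=g_x(\pm1)+D_\pm(x)\,(1\mp y)_+^{\,s}+\bigl(\text{terms of order }(1\mp y)\text{ and }(1\mp y)^{2s}\bigr)$ with $D_\pm(x)\neq0$. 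For the model corner profiles $t\mapsto\mathrm{sgn}(t)|t|^{\gamma}$ and $t\mapsto (t)_+^{\gamma}$ one checks directly — splitting the double integral according to whether $\xi,\eta$ lie on the same side of the corner and are comparable in size — that the Gagliardo seminorm of order $r$ is finite exactly when $\gamma\ge1$ (then for all $r<1$) or $\gamma<1$ and $r<\gamma+1/q$; applying this with $\gamma=\beta$ at $y=x$ and $\gamma=s$ at $y=\pm1$, and recombining with the global pieces, yields $g_x\in W^{r,q}(\mathbb R)$ for $r$ in the asserted range.

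\emph{Main obstacle.} The hard part is the bookkeeping of the genuinely two-sided and off-diagonal pieces of the Gagliardo double integral near the corners: one must verify that multiplying a corner profile of $g_x$ by a bounded $C^1$ factor (which depends on $x$) does not worsen the exponent, and that the interaction between the corner region and the rest of $\mathbb R$ — including the tail $|y|\to\infty$ — imposes no restriction on $r$ beyond $r<1$. Carrying this out so that the admissible range of $r$ matches exactly what is claimed, i.e.\ reconciling the corner at $y=x$ of order $\beta=p(1-s)-2$ with the $C^s$ corners at $y=\pm1$, is the point that requires the most care.
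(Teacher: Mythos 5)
Your handling of the first claim is fine and, modulo packaging, equivalent to the paper's: the paper changes variable via $t=\frac{x-y}{1-xy}$ and uses \eqref{eq:asympt0} with $\alpha=p-2$ to see the integrand behave like $|t|^{p(1-s)-2}$ at $t=0$, while you expand directly at $y=x$ (treating $x=0$ separately); both give integrability precisely for $s<1-1/p$.

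The genuine gap is in the last step of your Sobolev argument, and it is exactly where your route departs from the paper's. Your corner analysis at $y=\pm1$ is correct: $g_x(y)=g_x(\pm1)+D_\pm(x)(1\mp y)_+^{s}+\text{higher order}$, with $D_\pm(x)=-(p-1)\,m^s\,u_{s,p}(x)^{p-2}\,|x\mp1|^{-(1+sp)}\neq 0$, and your model computation gives the sharp local threshold $r<s+\frac1q$ for such a profile. But then ``recombining \dots\ yields the asserted range'' is a non sequitur: the asserted range is $r<\min\{1,p(1-s)-2\}$ for \emph{every} $q\ge1$, and the inclusion of that range in $\{r<s+\frac1q\}$ fails as soon as $p(1-s)-2>s$ and $q$ is large (it requires $\min\{1,p(1-s)-2\}\le s+\frac1q$). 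In that regime your own sharp criterion, together with $D_\pm(x)\neq0$, shows $g_x\notin W^{r,q}(\mathbb R)$ for $s+\frac1q\le r<\min\{1,p(1-s)-2\}$, so the claimed conclusion cannot be reached along your route; what your method actually establishes is membership for $0\le r<\min\{1,\ \min\{p(1-s)-2,\,s\}+\frac1q\}$. Concretely, with the parameters used later in the paper, $p=3$ and $s=2/15$, one has $\min\{1,p(1-s)-2\}=\frac35$, while $s+\frac1q=\frac{7}{15}<\frac35$ already for $q=3$ (for $q=2$ one has $s+\frac1q=\frac{19}{30}>\frac35$, which is why the numerical use in Section \ref{Chap5} is unaffected).

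For comparison, the paper's proof proceeds differently at this point: it gets the $L^q$ bound by the same change of variable as in Proposition \ref{prop:Wsp}, and for the Gagliardo seminorm it restricts to $I_3$ and to the configuration $y,z\to x$, asserting that this is ``the most singular case''; the endpoints $y=\pm1$ are never tested there. Your more careful localisation shows that this comparison hinges on whether $p(1-s)-2$ exceeds $s$ (up to the $\frac1q$ shift), so to close your argument you must either restrict the range to $r<\min\{1,\,p(1-s)-2,\,s+\frac1q\}$ (or add the hypothesis $\min\{1,p(1-s)-2\}\le s+\frac1q$), or show that the coefficient $D_\pm(x)$ vanishes --- which it does not.
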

\begin{proof}
Fix any $x\in(-1,1)$. Via the usual change of variable $t=\frac{x-y}{1-xy}$, we get
$$
\begin{aligned}
\int_{-1}^1 g_x(y)dy&=\int_{-1}^1\frac{\left|(1-|x|^m)^s-\left(1-\left|\frac{x-t}{1-tx}\right|^m\right)^s\right|^{p-2}\left[(1-|x|^m)^s-\left(1-\left|\frac{x-t}{1-tx}\right|^m\right)^s\right]}{|t|^{1+sp}\left(\frac{1-x^2}{1-tx}\right)^{sp}}dt\\
&=:\int_{-1}^1 f_x(t)dt.
\end{aligned}
$$
Using \eqref{eq:asympt0}, with $\alpha=p-2$, we have that $f_x(t)\sim c(x)\frac{|t|^{p-2}t}{|t|^{1+sp}}$ as $t\to 0$, and so the integral is finite. 
Now, in order to prove the last part of the statement, we write 
$$
\int_\mathbb{R}|g_x(y)|^q dy = \int_{\mathbb{R}\setminus(-1,1)}\frac{(1-|x|^m)^{s(p-1)q}}{|x-y|^{(1+sp)q}}dy+\int_{-1}^1|g_x(y)|^qdy.
$$
The first integral in the sum is finite, being $x\in (-1,1)$ fixed, and $y\not\in(-1,1)$. Concerning the second one, we re-write it arguing as in the first part of this proof
$$
\int_{-1}^1|g_x(y)|^q dy=\int_{-1}^1 |f_x(t)|^q dt, 
$$
and use that $|f_x(t)|^q\sim \frac{c(x)^q}{|t|^{(1+sp-(p-1))q}}$ as $t\to 0$, to conclude that $g_x(y)\in L^q(\mathbb R)$ whenever $(2-p(1-s))q<1$. In particular, $g_x(y)\in L^q(\mathbb R)$ for every $q\ge 1$, if $2-p(1-s)<0$. 
We need to show now that the following integral is finite
$$
\begin{aligned}
\int_\mathbb{R}\int_\mathbb{R}\frac{|g_x(y)-g_x(z)|^q}{|y-z|^{1+rq}}dy\,dz&=\int_{\mathbb{R}\setminus(-1,1)}\int_{\mathbb{R}\setminus(-1,1)}\dots \, dy\,dz+2\int_{\mathbb{R}\setminus(-1,1)}\int_{-1}^1\dots\, dy\,dz\\
&\phantom{=} +\int_{-1}^1\int_{-1}^1\dots \, dy\,dz=: I_1+2I_2+I_3
\end{aligned}
$$
for some $r$. To this aim, we observe that the most singular case is when $y,\,z\in (-1,1)$, and both $y\to x$ and $z\to x$. Therefore, we restrict the study only to the last integral in the sum above:
\begin{equation}\label{eq:I_3}
I_3=\int_{-1}^x\left(\int_{-1}^1\frac{|g_x(y)-g_x(z)|^q}{|y-z|^{1+rq}} dy\right)dz+\int_x^1\left(\int_{-1}^1\frac{|g_x(y)-g_x(z)|^q}{|y-z|^{1+rq}} dy\right)dz.
\end{equation}
We consider the first inner integral in $dy$. For every $z\in(-1,x)$ fixed, and $y\to x$:
\begin{equation}\label{eq:asym-gx}
g_x(y)\sim\mathrm{sgn}(x)\left(\frac{ms |x|^{m-1}}{(1-|x|^m)^{1-s}}\right)^{p-1}\frac{|y-x|^{p-2}(y-x)}{|y-x|^{1+sp}}=:c(x)\mathrm{sgn}(y-x)|y-x|^{p(1-s)-2},
\end{equation}
and so, being $2-p(1-s)<0$, 
$$
\frac{|g_x(y)-g_x(z)|^q}{|y-z|^{1+rq}}\sim \frac{|g_x(z)|^q}{|x-z|^{1+rq}}\quad\mbox{as } y\to x.
$$
Thus, integrating now in $dz$, we have that the integral $\int_{-1}^x\left(\int_{-1}^1\frac{|g_x(y)-g_x(z)|^q}{|y-z|^{1+rq}} dy\right)dz$ has the same behavior of 
$$
\int_{-1}^x \frac{|g_x(z)|^q}{|x-z|^{1+rq}}dz.
$$
Now, like in \eqref{eq:asym-gx}, as $z\to x$,
$$
\frac{|g_x(z)|^q}{|x-z|^{1+rq}}\sim \frac{|c(x)\mathrm{sgn}(z-x)|z-x|^{p(1-s)-2}|^q}{|x-z|^{1+rq}}=\frac{|c(x)|^q}{|x-z|^{1+rq+(2-p(1-s))q}}.
$$
Hence, the first double integral in \eqref{eq:I_3} is convergent, being $1+rq+(2-p(1-s))q<1$ by assumption. The proof of the convergence of the second integral is similar and we omit it. 
\end{proof}

\section{Numerical investigation}\label{Chap5}

In this Section we show that $\exists\, p>2$ and $\exists\, s \in (0,1)$ such that (\ref{eq:classical-Deltaps0}) is not constant in $(-1,1)$. To this aim it is sufficient to show that 
\begin{equation}\label{eq:global-integral}
I^{(s,p)}(x)=\lim_{\varepsilon \to 0}\int_{(B_\varepsilon(x))^c} g_x(y)dy
\end{equation}
is not constant, where $g_x$ is the function defined in (\ref{eq:gx}).
We will limit ourselves to provide numerical evidence to this statement.

For sake of clearness, we omit now the indices $s$ and $p$ in $I^{(s,p)}(x)$, noticing that the approximations we are going to present are valid for any $s$ and $p$ for which $I(x)=I^{(s,p)}(x)$ is finite. 
Then we split $I(x)=I^{(s,p)}(x)$  into the sum of six contributions as follows:
\begin{equation}\label{eq:decomposed-itegral}
I(x)=\underbrace{\int_{-\infty}^{-1} \!\!\!\!\! g_x(y)dy}_{I_1(x)}+
\underbrace{\int_{-1}^{x-\varepsilon} \!\!\!\!\!g_x(y)dy}_{I_2(x)}+
\underbrace{\int_{x-\varepsilon}^x \!\!\!\!\! g_x(y)dy}_{I_3(x)}+
\underbrace{\int_x^{x+\varepsilon}\!\!\!\!\!  g_x(y)dy}_{I_4(x)}+
\underbrace{\int_{x+\varepsilon}^1 \!\!\!\!\! g_x(y)dy}_{I_5(x)}+
\underbrace{\int_1^{+\infty} \!\!\!\!\! g_x(y)dy}_{I_6(x)},
\end{equation}
where $\varepsilon>0$ will be specified later.

The most challenging integrals to compute are $I_3(x)$ and $I_4(x)$ because of the presence of the singularity of $g_x(y)$ at $y=x$.

From now on, we denote by $\tilde I_k(x)$ the numerical approximation of the integral $I_k(x)$ for $k=1,\ldots,6$.

The integrals $I_1(x)$ and $I_6(x)$ are approximated by an adaptive quadrature formula implemented in the functions {\tt integral} and {\tt quadva} of MATLAB \cite{Shampine_2008}, after operating a  change of variable that transforms them to integrals on a finite interval with a very mild singularity.
The approximated integrals $\tilde{I}_1(x)$ and $\tilde I_6(x)$  are computed by ensuring that
\begin{equation}\label{eq:stima-errore-integrali16}
|I_k(x)-\tilde I_k(x)|\leq 10^{-15} \qquad \qquad \mbox{ for }k\in\{1,6\}.
\end{equation}

Since we are performing our computations with double-precision arithmetic for which the machine precision is about $10^{-16}$, the tolerance of $10^{-15}$ in (\ref{eq:stima-errore-integrali16}) is fully satisfactory.

\medskip
The approximate integrals $\tilde I_2(x),\ldots,\tilde I_5(x)$ are computed by the Gauss--Legendre quadrature formula using $(n+1)$ nodes (see, e.g. \cite[(2.3.10)]{chqz07}).
To highlight the dependence of the computed integrals on the number of nodes, we use the notation $\tilde I_{k,n}(x)$ instead of $\tilde I_{k}(x)$, for $k\in\{2,3,4,5\}$.

For what concerns the numerical error of the Gauss--Legendre quadrature formula, it is possible to prove that there exists a positive constant $C$ only depending on the size of the integration interval such that, for $k=2,\ldots,5$  and for any $x\in (-1,1)$, it holds
\begin{equation}\label{eq:stima-errore-integrali25}
|I_k(x)-\tilde I_{k,n}(x)|\leq C n^{-\sigma} \|g_x\|_{W^{\sigma,2}(\Lambda_k)},
\end{equation}
provided that $g_x\in W^{\sigma,2}(\Lambda_k)$ for some $\sigma>1/2$ and where
$\Lambda_k$ denotes the integration interval of the integral $I_k(x)$.
The proof of (\ref{eq:stima-errore-integrali25}) follows by applying the estimate (5.3.4a) of \cite{chqz07} and the estimate (3.7) of \cite{canuto_quarteroni_82} with Legendre weight $w(y)\equiv 1$.

Then, thanks to the estimates (\ref{eq:stima-errore-integrali16}) and (\ref{eq:stima-errore-integrali25}), it holds that the global approximated integral 
\begin{equation}\label{eq:global-appx-integral}
\tilde I(x)=\sum_{k=1}^6 \tilde I_k(x)
\end{equation}
satisfies the estimate
\begin{equation}\label{eq:stima-errore-globale}
|I(x)-\tilde I(x)|\leq c n^{-\sigma }
\|g_x\|_{W^{\sigma,2}(\Lambda_k)}+10^{-15},
\end{equation}
i.e., $\tilde I(x)$ converges to the exact value $I(x)$ when $n\to \infty$, for any $x\in(-1,1)$ up to the tolerance $\overline\epsilon=10^{-15}$.

To get it, it is sufficient to take a number $(n+1)$ of quadrature nodes sufficiently large to guarantee that the error $|I(x)-\tilde I(x)|$ be small enough. Since the value of $I(x)$ is unknown when $p\neq 2$, but it is known when $p=2$, we take the case $p=2$ as a playground to learn how many quadrature nodes we need to consider in order to approximate $I(x)$ with the desired accuracy. 

Let us now resume the original notation of $I^{(s,p)}(x)$ because we are interested in distinguishing what happens for different values of $p$ and $s$.

\subsection{The case $p=2$}
When $p=2$ we know that  (see the proof of Theorem \ref{thm:frac-tor})
\begin{equation}\label{eq:Is2}
I^{(s,2)}(x)=\frac{\pi}{\sin(\pi s)}.
\end{equation}

In Figure \ref{fig:integrali-p2}, left,  we plot the values $\tilde
I^{(s,2)}(x)$ , for several values of $x\in(-1,1)$ and for $s\in\{0.2,\ 0.4,\ 0.5,\ 0.58\overline{3}\}$. 
We have chosen $\varepsilon=1/50$ in (\ref{eq:decomposed-itegral}). Numerical results are fully consistent with the theoretical result reported in (\ref{eq:Is2}), the values $\frac{\pi}{\sin(\pi s)}$ are represented by the empty squares (only in correspondence of $x=0$).

In Figure \ref{fig:integrali-p2}, right, we report the absolute errors $|I^{(s,2)}(x)-\tilde I^{(s,2)}(x)|$ for several values of $x\in(-1,1)$.
When $s=0.2$, $s=0.4$, and $s=0.5$, the errors are all below $5\cdot 10^{-6}$.
Instead, when $s=0.58\overline{3}$, the errors are about $10^{-6}$ in the middle of the
interval and reach the value $10^{-4}$ when $|x|$ tends to 1. We explain this
behavior to the fact that when $s\to 1^-$, the order of infinity of the
function $g_x(y)$ at $y=x$ increases and the computation of the corresponding
integral is very demanding.

\begin{figure}
\begin{center}
    \includegraphics[width=0.48\textwidth]{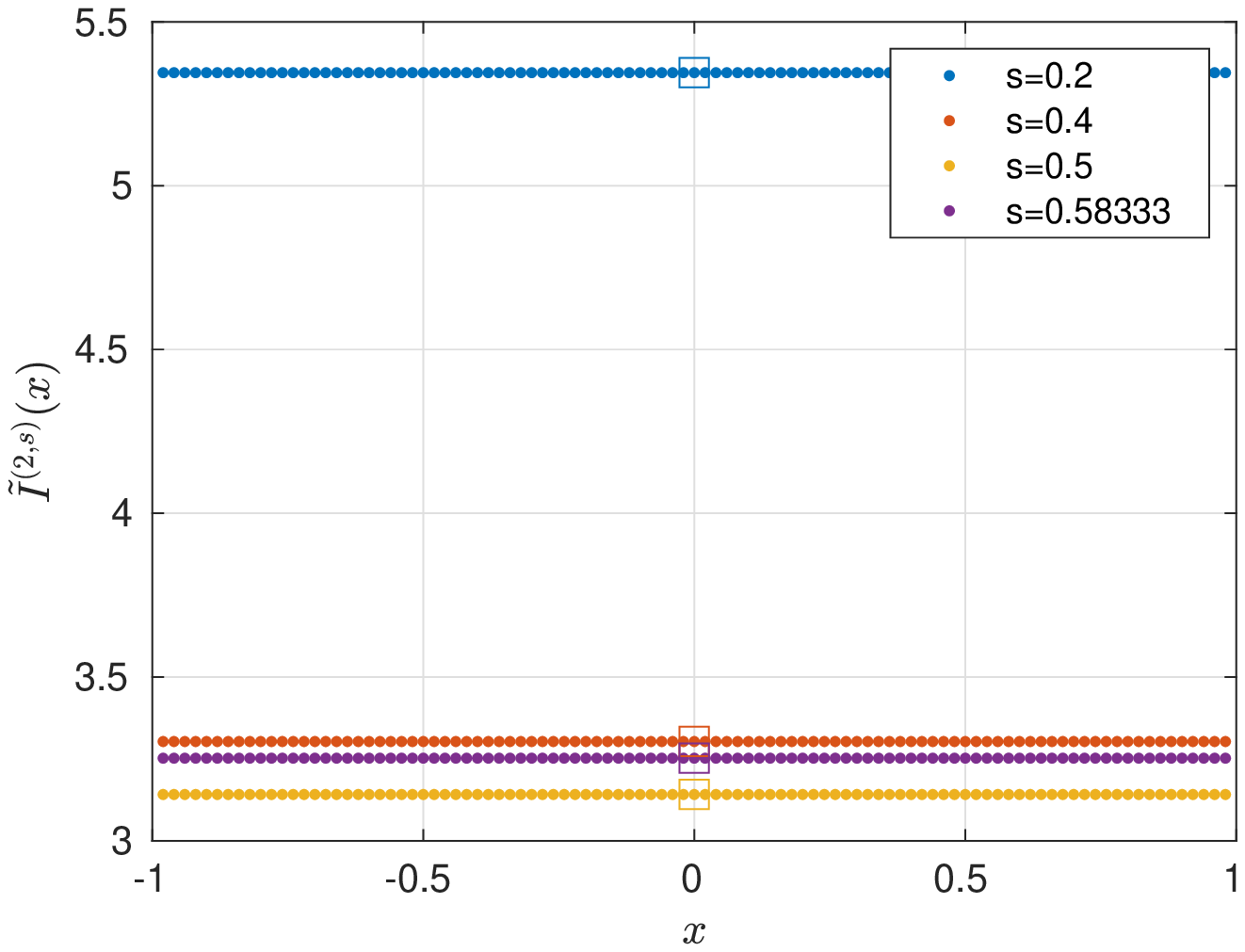}\quad
    \includegraphics[width=0.48\textwidth]{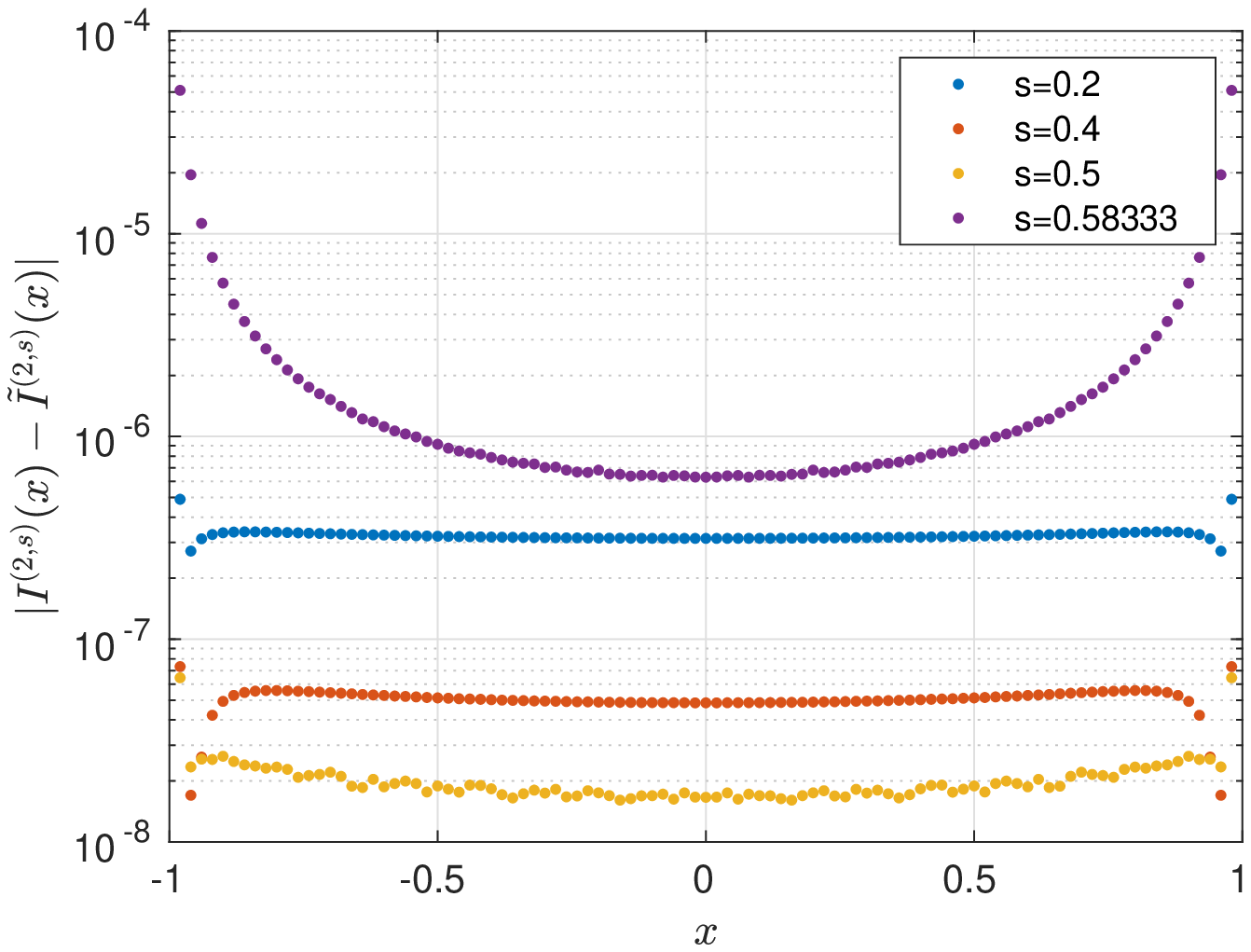}
\end{center}
\caption{On the left, the approximated  integrals $\tilde I^{(s,2)}(x)$, the empty squares at $x=0$ represent the values (\ref{eq:Is2}). On the right, the absolute errors $|I^{(s,2)}(x)-\tilde I^{(s,2)}(x)|$}
\label{fig:integrali-p2}
\end{figure}

In Figure \ref{fig:errori-p2-vs-n} we show the behavior of the errors $|I^{(s,2)}(x)-\tilde I^{(s,2)}(x)|$ versus $n$, and for different values of $s$, at $x=0$ (left) and $x=0.5$ (right).

 When $p=2$ there is no value of $s>1/2$ for which we know that $g_x\in W^{s,2}({\mathbb
R})$ (see Proposition \ref{prop:gxy}), hence we cannot take advantage of the
estimate (\ref{eq:stima-errore-integrali25}). Yet, we observe that the errors
for all the values of $s$ decrease when $n$ grows up, showing convergence of
the approximated integrals to the exact ones. The value $n=256$ provides very
satisfactory results: all the errors are lower than $10^{-6}$. Moreover, we can conclude that the accuracy of the quadrature formula at $x=0$ and $x=0.5$ is almost the same for $n$ ranging between $64$ and $256$.

\begin{figure}
\begin{center}
    \includegraphics[width=0.48\textwidth]{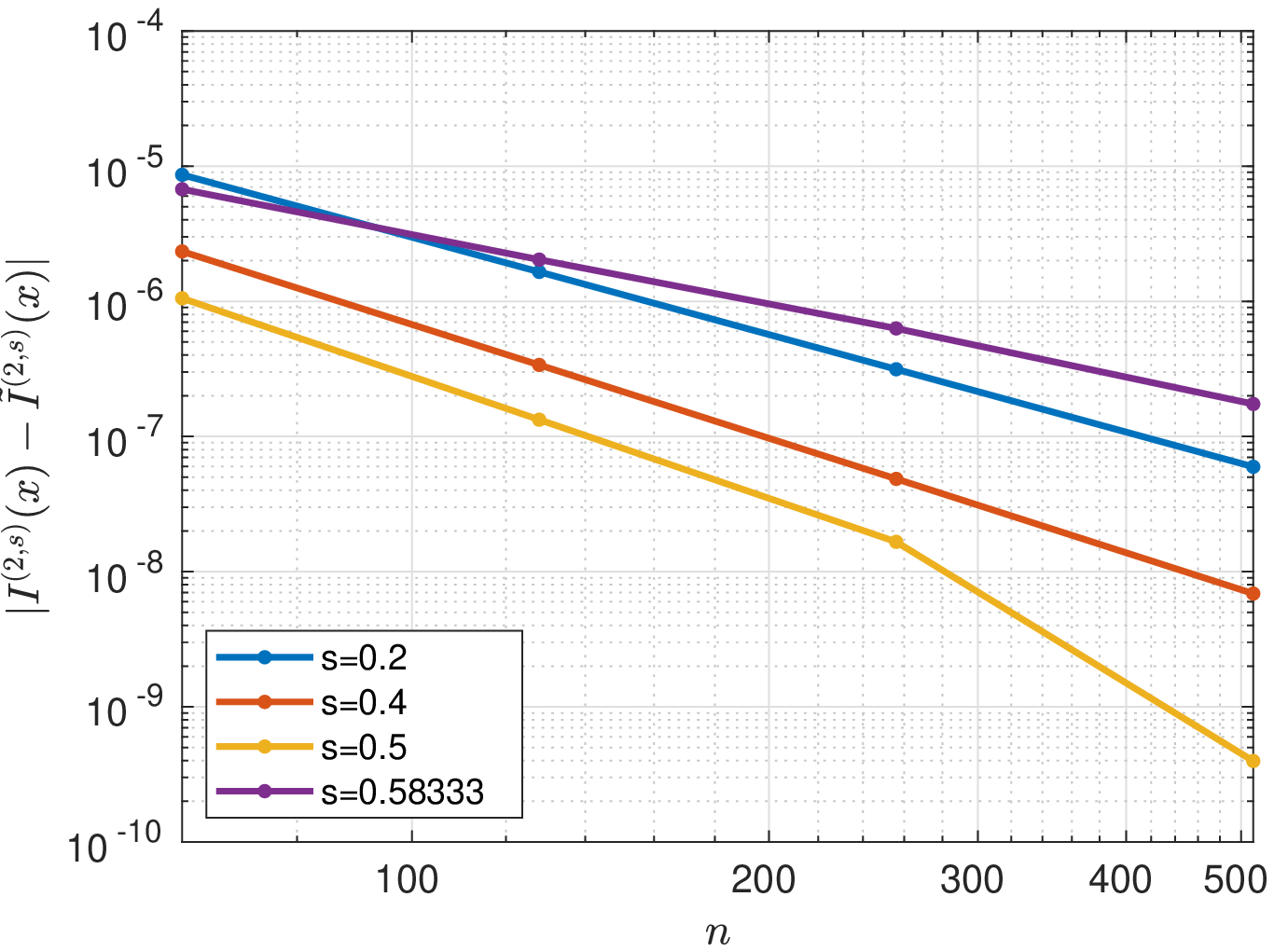}\quad
    \includegraphics[width=0.48\textwidth]{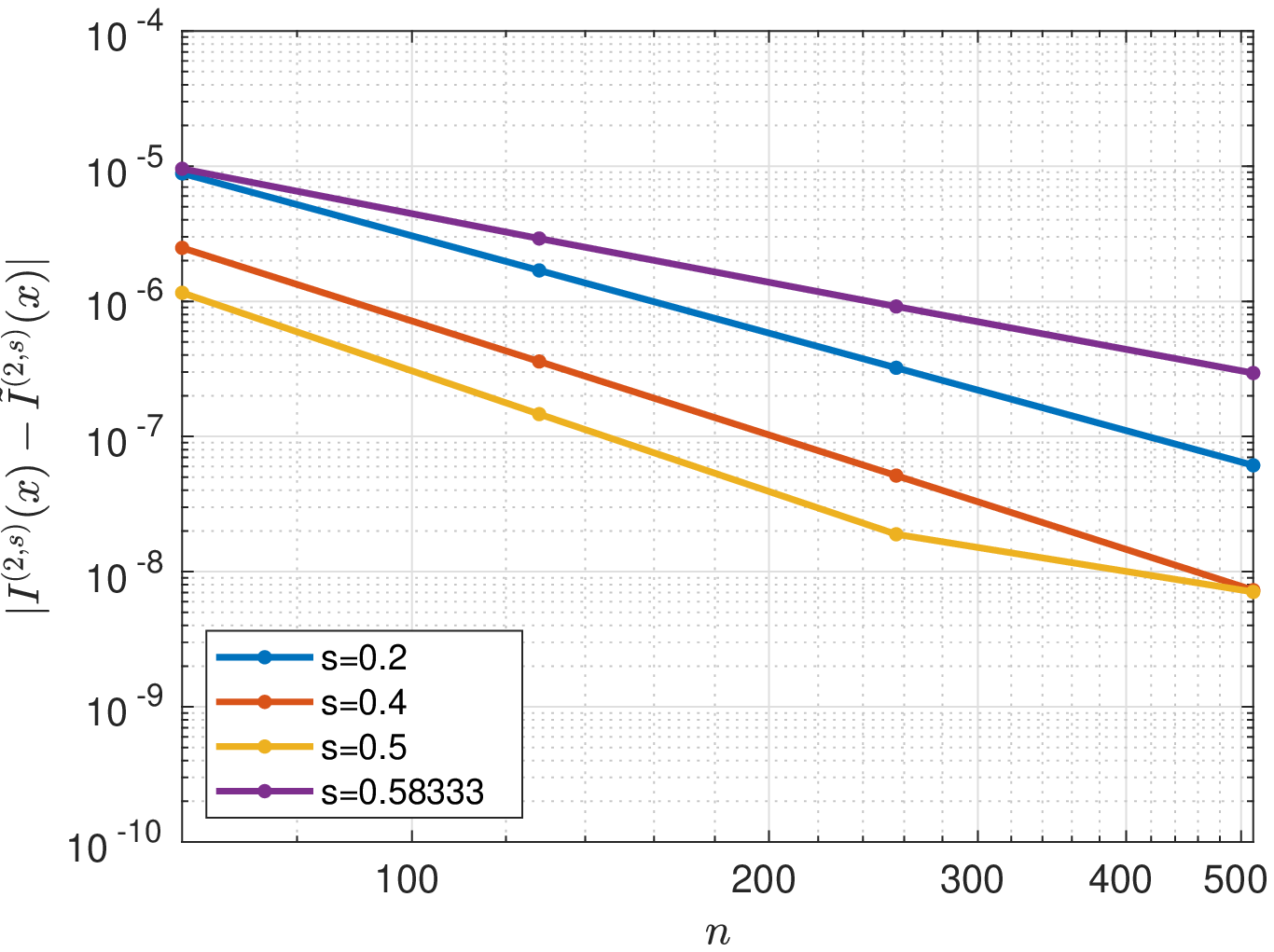}
\end{center}
\caption{The absolute errors $|I^{(s,2)}(x)-\tilde I^{(s,2)}(x)|$ versus $n$ for different values of $s$. On the left at $x=0$, on the right at $x=0.5$}
\label{fig:errori-p2-vs-n}
\end{figure}

\subsection{The case $p\neq 2$}

So far, we have tested the accuracy of our quadrature formulas; now  we can move to the case $p\neq 2$, for which we only know the exact value of the integral $I^{(s,p)}(x)$ when $x=0$. As a matter of fact, we have (see  (\ref{eq:x=0}))
\begin{equation}\label{eq:integrale-esatto-p-0}
I^{(s,p)}(0)=\frac{2}{sp}+2\int_0^1\frac{(1-(1-y^m)^s)^{p-1}}{y^{1+sp}}dy
\end{equation}
and we have computed it symbolically by Wolfram Mathematica \cite{Mathematica}.

In Figure \ref{fig:integrali-p3}, left, we report the values of $I^{(s,p)}(x)$ when $p=3$, for five values of $s$ and different values of $x\in(-1,1)$. Clearly,  $I^{(s,p)}(x)$ is not constant in $(-1,1)$. The square symbols at $x=0$ represent the exact values 
(\ref{eq:integrale-esatto-p-0}). In the right picture of Figure \ref{fig:integrali-p3} we display the errors $|I^{(s,3)}(0)-\tilde{I}^{(s,3)}(0)|$ for five values of $s$ versus the parameter $n$ (related to the number of quadrature nodes). When $n$ increases all the errors decrease with rate comparable with that for the case $p=2$  (see Figure \ref{fig:errori-p2-vs-n}). Then we expect that the same accuracy occur in correspondence to other points $x\neq  0$ that stand sufficiently far from the end-points of the interval $(-1,1)$.  
Differently than for the case $p=2$, here we have reported numerical results also for $s=2/15$, so that $g_x\in W^{r,2}({\mathbb R})$ with $r=0.6$, and for which the estimate (\ref{eq:stima-errore-integrali25}) holds.

\begin{figure}
\begin{center}
    \includegraphics[width=0.48\textwidth]{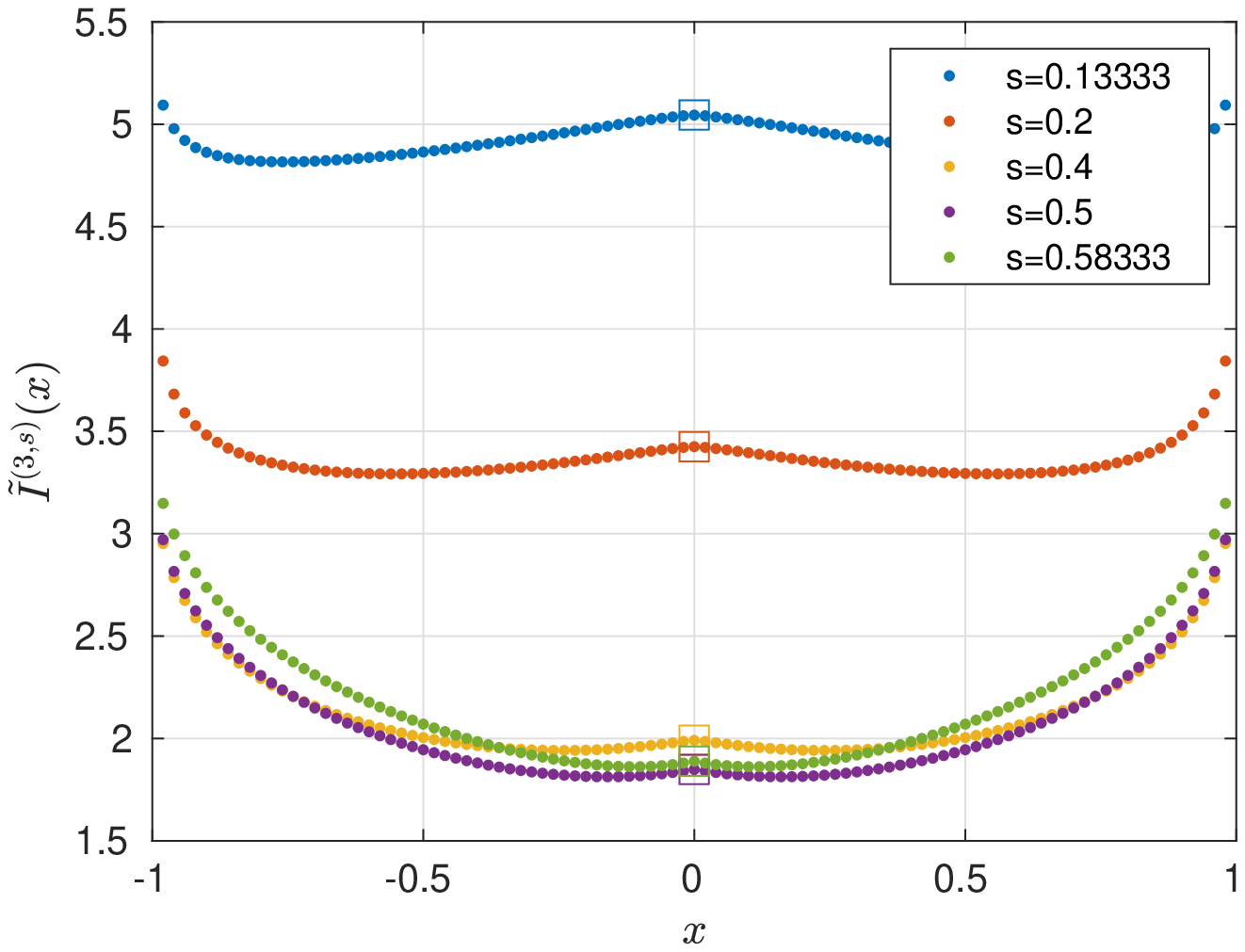}\quad
    \includegraphics[width=0.48\textwidth]{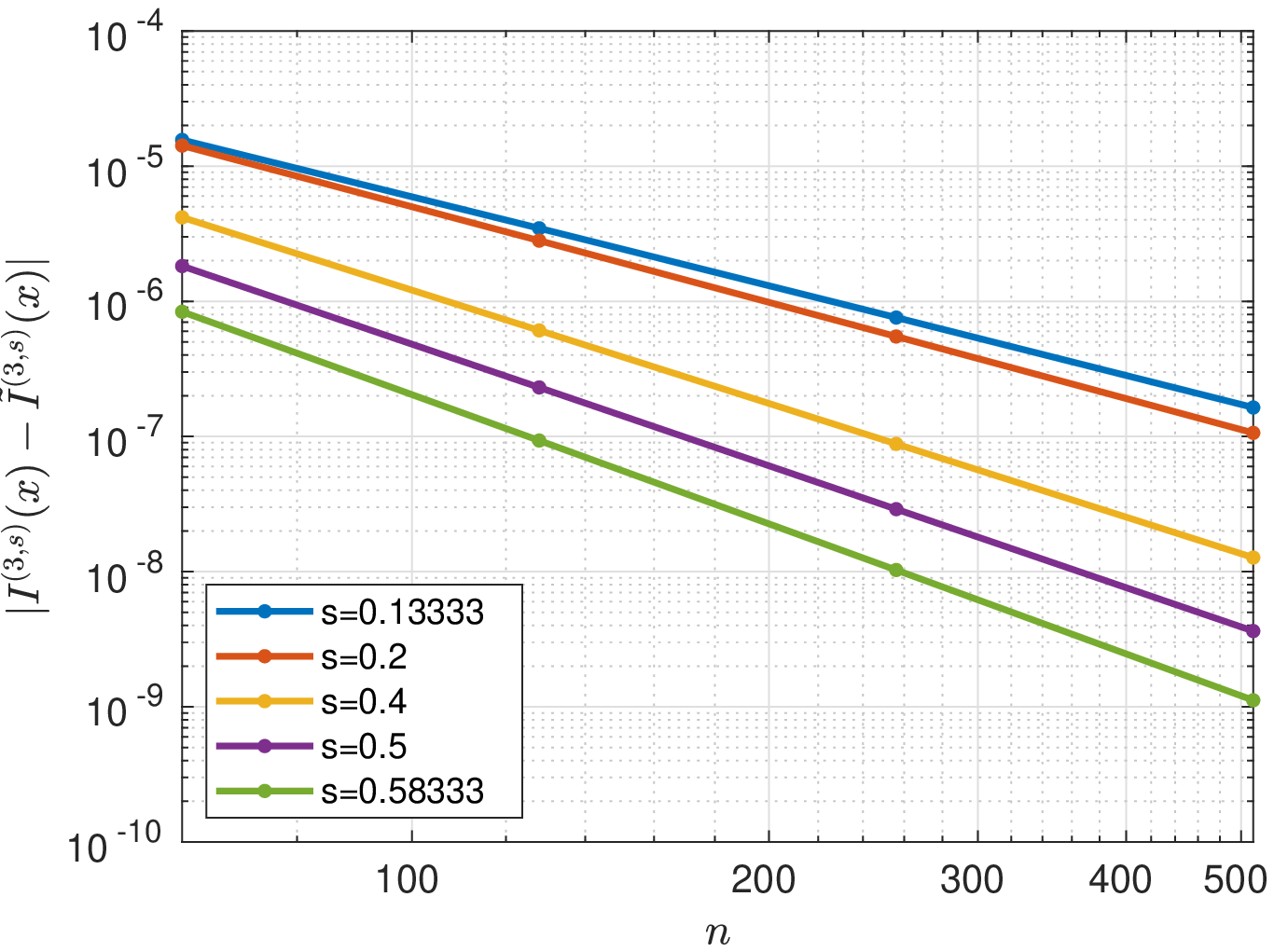}
\end{center}
\caption{On the left, the approximated  integrals $\tilde I^{(s,3)}(x)$. The empty squares at $x=0$ represent the exact values (\ref{eq:integrale-esatto-p-0}). On the right, the absolute errors $|I^{(s,3)}(x)-\tilde I^{(s,3)}(x)|$ at $x=0$. The numerical integrals are evaluated using $(n+1)$ nodes.}
\label{fig:integrali-p3}
\end{figure}

Similar results, but now for $p=4$, are shown in Figure \ref{fig:integrali-p4}: on the left, we report the values of $I^{(s,4)}(x)$ for four values of $s$ and different values of $x\in(-1,1)$. Also in this case it is evident that  $I^{(s,4)}(x)$ is not constant in $(-1,1)$. The square symbols at $x=0$ refer to the exact values 
(\ref{eq:integrale-esatto-p-0}). In the right picture of Figure \ref{fig:integrali-p4} we show the errors $|I^{(s,4)}(0)-\tilde{I}^{(s,4)}(0)|$ for four values of $s$ versus the parameter $n$ (related to the number of quadrature nodes). Similar conclusions made for $p=3$ can be drawn for $p=4$, too. 
 
\begin{figure}
\begin{center}
    \includegraphics[width=0.48\textwidth]{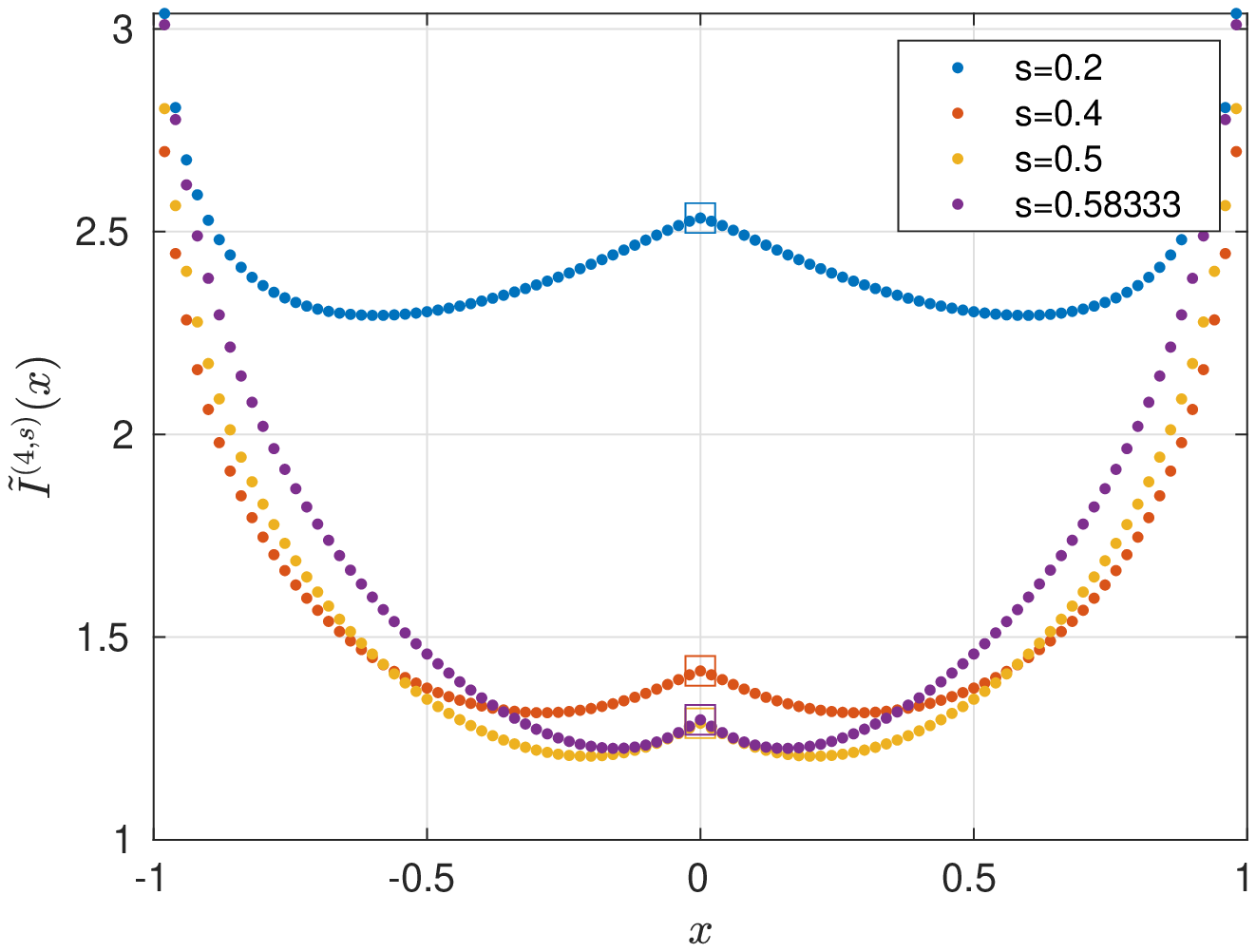}\quad
    \includegraphics[width=0.48\textwidth]{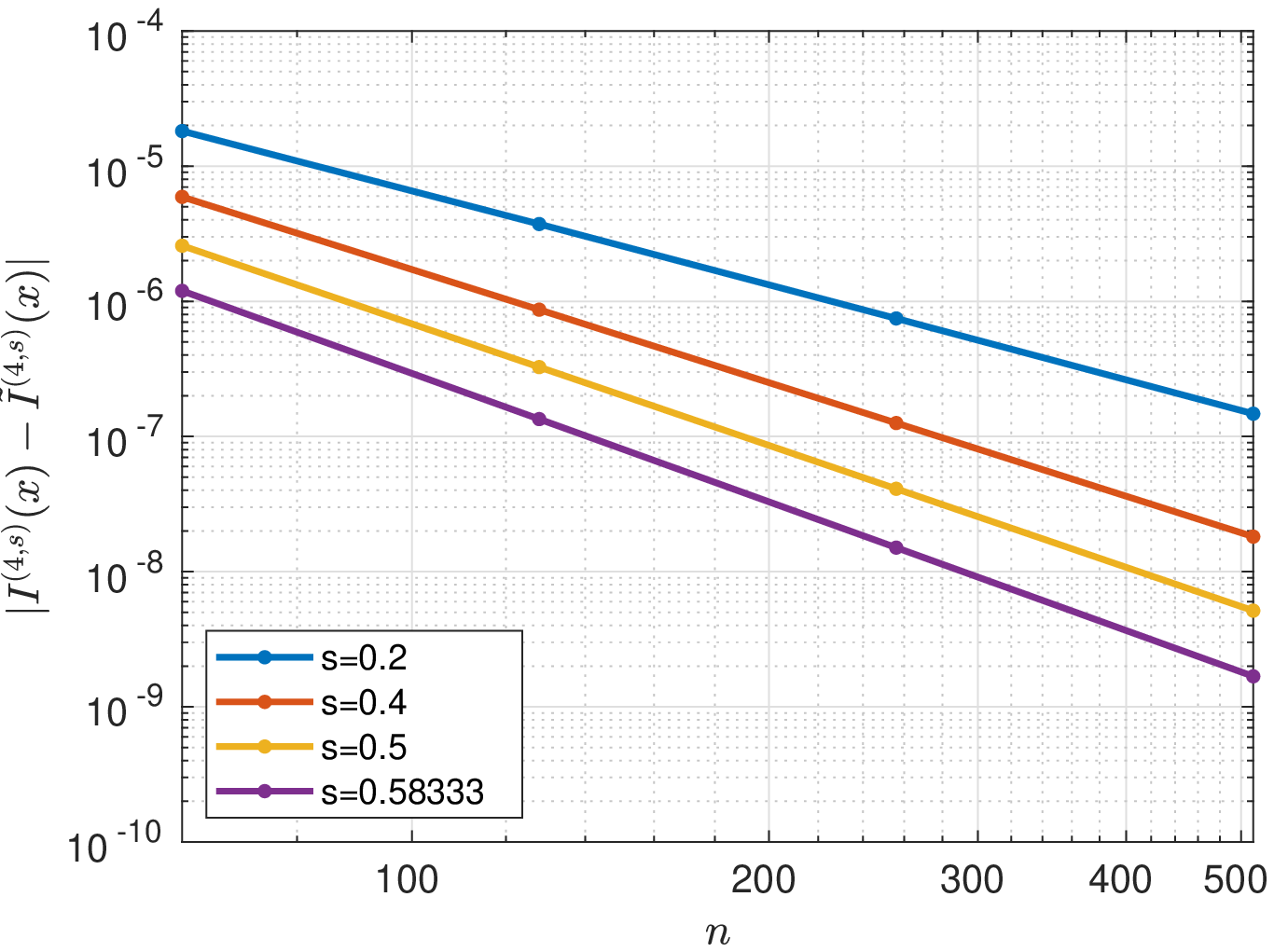}
\end{center}
\caption{On the left, the approximated  integrals $\tilde I^{(s,4)}(x)$. The empty squares at $x=0$ represent the exact values (\ref{eq:integrale-esatto-p-0}). On the right, the absolute errors $|I^{(s,4)}(x)-\tilde I^{(s,4)}(x)|$ at $x=0$. The numerical integrals are evaluated using $(n+1)$ nodes.}
\label{fig:integrali-p4}
\end{figure}

Bearing in mind that when $p=2$ the errors at $x=0$ and $x=0.5$ were substantially the same, for a fixed value of $s$, we can conclude that also when $p>2$ the accuracy in approximating the integrals at $x\neq 0$ is comparable to that obtained at $x=0$.
Moreover, we observe that, for a fixed $s$, the regularity of $g_x(y)$ increases with $p$ and this allows us to benefit of the greater convergence order in the estimate (\ref{eq:stima-errore-integrali25}). This implies that, when $p>2$, we can expect that the approximated integrals are at least accurate as those for $p=2$.

In conclusion, in Table \ref{tab:confronto-finale} we report the approximated values $\tilde I^{(s,p)}(x)$ for $p=3$ and $p=4$, for some values of $s$ and at the two points $x=0$ and $x=0.5$. Because these values approximate the corresponding exact values with errors lower than about $10^{-6}$, we can state once more that $\exists\, p\neq 2$ and $\exists \, s\in(0,1)$ such that $I^{(s,p)}$ is not constant in $(-1,1)$.

\begin{table}\label{tab:confronto-finale}
\begin{center}
\renewcommand{\arraystretch}{1.2}
\begin{tabular}{l|l l}
$s$ & $\tilde I^{(s,3)}(0)$ & $\tilde I^{(s,3)}(0.5)$\\
\hline
$0.1\overline{3}$  &  $5.0446$  & $4.8644$ \\ 
$0.20$  &  $3.4253$  & $3.2945$ \\ 
$0.40$  &  $1.9911$  & $2.0046$ \\ 
$0.50$  &  $1.8484$  & $1.9451$ \\ 
$0.58\overline{3}$  &  $1.8891$  & $2.0702$ \\ 
\end{tabular}
\qquad
 \begin{tabular}{l|l l}
$s$ & $\tilde I^{(s,4)}(0)$ & $\tilde I^{(s,4)}(0.5)$\\
\hline
$0.1\overline{3}$  &  $3.7625$  & $3.4608$ \\ 
$0.20$  &  $2.5335$  & $2.3025$ \\ 
$0.40$  &  $1.4166$  & $1.3743$ \\ 
$0.50$  &  $1.2876$  & $1.3469$ \\ 
$0.58\overline{3}$  &  $1.2962$  & $1.4584$ \\ 
\end{tabular}
\end{center}
\caption{The values of $\tilde I^{(s,p)}(0)$ and $\tilde I^{(s,p)}(0.5)$ for some values of $s$, computed with the formula (\ref{eq:global-appx-integral}) and $n=256$. On the left $p=3$, on the right $p=4$. These values approximate the corresponding exact values with errors lower than $5\cdot 10^{-5}$.}
\end{table}

\section*{Acknowledgments}
F.C. was partially supported by the INdAM - GNAMPA Project 2020 ``Problemi ai limiti per l'equazione della curvatura media prescritta''.

\bibliographystyle{abbrv}
\bibliography{biblio}

\def\cprime{$'$}
\begin{thebibliography}{10}

\bibitem{CS}
X.~Cabr\'{e} and Y.~Sire.
\newblock Nonlinear equations for fractional {L}aplacians, {I}: {R}egularity,
  maximum principles, and {H}amiltonian estimates.
\newblock {\em Ann. Inst. H. Poincar\'{e} Anal. Non Lin\'{e}aire},
  31(1):23--53, 2014.

\bibitem{chqz07}
C.~Canuto, M.~Y. Hussaini, A.~Quarteroni, and T.~A. Zang.
\newblock {\em {Spectral Methods. Evolution to Complex Geometries and
  Applications to Fluid Dynamics}}.
\newblock Springer, Heidelberg, 2007.

\bibitem{canuto_quarteroni_82}
C.~Canuto and A.~Quarteroni.
\newblock {Approximation results for orthogonal polynomials in Sobolev spaces}.
\newblock {\em Math. Comput.}, 38:67--86, 1982.

\bibitem{CL}
W.~Chen and C.~Li.
\newblock Maximum principles for the fractional {$p$}-{L}aplacian and symmetry
  of solutions.
\newblock {\em Adv. Math.}, 335:735--758, 2018.

\bibitem{CC}
E.~Cinti and F.~Colasuonno.
\newblock A nonlocal supercritical {N}eumann problem.
\newblock {\em J. Differential Equations}, 268(5):2246--2279, 2020.

\bibitem{CF}
F.~Colasuonno and F.~Ferrari.
\newblock The soap bubble-theorem and a {$p$}-{L}aplacian overdetermined
  problem.
\newblock {\em Commun. Pure Appl. Anal.}, 19(2):983--1000, 2020.

\bibitem{DPQ}
L.~M. Del~Pezzo and A.~Quaas.
\newblock A {H}opf's lemma and a strong minimum principle for the fractional
  {$p$}-{L}aplacian.
\newblock {\em J. Differential Equations}, 263(1):765--778, 2017.

\bibitem{Dyda}
B.~Dyda.
\newblock Fractional calculus for power functions and eigenvalues of the
  fractional {L}aplacian.
\newblock {\em Fract. Calc. Appl. Anal.}, 15(4):536--555, 2012.

\bibitem{FJ}
M.~M. Fall and S.~Jarohs.
\newblock Overdetermined problems with fractional {L}aplacian.
\newblock {\em ESAIM Control Optim. Calc. Var.}, 21(4):924--938, 2015.

\bibitem{IMS}
A.~Iannizzotto, S.~Mosconi, and M.~Squassina.
\newblock Global {H}\"{o}lder regularity for the fractional {$p$}-{L}aplacian.
\newblock {\em Rev. Mat. Iberoam.}, 32(4):1353--1392, 2016.

\bibitem{Mathematica}
W.~R. Inc.
\newblock Mathematica, {V}ersion 12.3.1.
\newblock Champaign, IL, 2021.

\bibitem{IH}
H.~Ishii and G.~Nakamura.
\newblock A class of integral equations and approximation of {$p$}-{L}aplace
  equations.
\newblock {\em Calc. Var. Partial Differential Equations}, 37(3-4):485--522,
  2010.

\bibitem{J}
S.~Jarohs.
\newblock Strong comparison principle for the fractional {$p$}-{L}aplacian and
  applications to starshaped rings.
\newblock {\em Adv. Nonlinear Stud.}, 18(4):691--704, 2018.

\bibitem{JW}
S.~Jarohs and T.~Weth.
\newblock On the strong maximum principle for nonlocal operators.
\newblock {\em Math. Z.}, 293(1-2):81--111, 2019.

\bibitem{LL}
E.~Lindgren and P.~Lindqvist.
\newblock Fractional eigenvalues.
\newblock {\em Calc. Var. Partial Differential Equations}, 49(1-2):795--826,
  2014.

\bibitem{M}
J.~J. Manfredi.
\newblock {$p$}-harmonic functions in the plane.
\newblock {\em Proc. Amer. Math. Soc.}, 103(2):473--479, 1988.

\bibitem{MN}
R.~Musina and A.~I. Nazarov.
\newblock Strong maximum principles for fractional {L}aplacians.
\newblock {\em Proc. Roy. Soc. Edinburgh Sect. A}, 149(5):1223--1240, 2019.

\bibitem{SKM}
S.~G. Samko, A.~A. Kilbas, and O.~I. Marichev.
\newblock {\em Fractional integrals and derivatives}.
\newblock Gordon and Breach Science Publishers, Yverdon, 1993.
\newblock Theory and applications, Edited and with a foreword by S. M.
  Nikol\cprime ski\u{\i}, Translated from the 1987 Russian original, Revised by
  the authors.

\bibitem{Shampine_2008}
L.~F. Shampine.
\newblock Vectorized adaptive quadrature in {M}atlab.
\newblock {\em J. Comput. Appl. Math.}, 211(2):131--140, 2008.

\end{thebibliography}

\end{document}